\numberwithin{equation}{section}
\newcommand{\cM}{{\mathcal M}}
\newcommand{\cO}{{\mathcal O}}
\newcommand{\cc}{{\mathbb C}}
\newcommand{\pp}{{\mathbb P}}
\newcommand{\bA}{{\mathscr A}}
\newcommand{\Sym}{\mathrm{Sym}}
\newcommand{\End}{\mathrm{End}\,}
\newcommand{\h}{\widehat}
\newcommand{\Om}{\Omega}
\newtheorem{theorem}{{\textbf Theorem}}
\newtheorem{proposition}[theorem]{{\textbf Proposition}}
\newtheorem{lemma}[theorem]{{\textbf Lemma}}
\newtheorem{remit}[theorem]{{\textbf Remark}}
\newtheorem{ex}{{\textbf Example}}
\newenvironment{remark}{\begin{remit}\rm}{\end{remit}}
\newtheorem{defn}[theorem]{{\textbf Definition}}
\newenvironment{definition}{\begin{defn}\rm}{\end{defn}}
\title[Deformations of the tangent bundle]{Deformations of the tangent bundle  of a projective hypersurface}
\author{Insong Choe}
\address{Department of Mathematics, Konkuk University, Neungdong-ro, Gwangjin-gu, Seoul 05029, Korea}
\email{ischoe@konkuk.ac.kr}
\author{Kiryong Chung}
\address{Department of Mathematics Education, Kyungpook National University, 80 Daehakro, Bukgu, Daegu 41566, Korea}
\email{krchung@knu.ac.kr}
\author{Jun-Muk Hwang}
\address{Institute for Basic Science, Center for Complex Geometry, Daejeon 34126, Korea}
\email{jmhwang@ibs.re.kr}
\thanks{Jun-Muk Hwang was supported by the Institute for Basic Science (IBS-R032-D1). }
\begin{document}

\begin{abstract}
For a nonsingular hypersurface $X \subset \pp^n, n \geq 4,$ of degree $d \geq 2$, we show that the space $H^1(X, \End(T_X))$ of infinitesimal deformations of the tangent bundle $T_X$ has dimension ${n+d-1 \choose d} (d-1)$ and all infinitesimal deformations are unobstructed even though $H^2(X, \End(T_X))$ can be nonzero. Furthermore, we prove that the irreducible component of the moduli space of stable bundles containing  the tangent bundle is a rational variety, by constructing an explicit birational model.
\end{abstract}

\maketitle

\medskip
MSC2020: 14J60, 14J70, 14D20

\medskip
Key words: deformation,  tangent bundle,  projective hypersurface

\section{Introduction}\label{s.intro}
We work over complex numbers. For a vector bundle $E$ on a variety $X$, we use the notation $h^i(X, E) := \dim H^i(X, E).$

 Let $X \subset \pp^n, n \geq 4,$ be a nonsingular  hypersurface  of  degree $d\geq 2.$ Let $T_X$ be its tangent bundle and denote by $\End(T_X)$ (resp. $\End_0(T_X)$)  the bundle of endomorphisms (resp. traceless endomorphisms) of $T_X$. Then $H^1(X, \End(T_X)) = H^1(X, \End_0(T_X))$ (from $H^1(X, \cO_X) =0$) describes the infinitesimal deformations of $T_X$ as vector bundles.  In \cite[Theorem 6.1]{P},  Peternell proved that $H^1(X, \End_0(T_X)) \neq 0$, namely, the tangent bundle has nontrivial infinitesimal deformations. This naturally leads to the following questions. \begin{itemize}
\item[{\bf Q1}] What is the exact value of $h^1(X, \End (T_X))$?
\item[{\bf Q2}] Are all elements of $H^1(X, \End (T_X))$ unobstructed? Equivalently, is the deformation space of $T_X$ smooth at the point corresponding to $T_X$?
\end{itemize}
Furthermore, the tangent bundle $T_X$ is a stable bundle with respect to the polarization $\mathcal{O}(1)$ on $\pp^n$ by \cite[Corollary 0.3]{PW}. Thus $T_X$ represents a point $[T_X]$ in  the
moduli scheme of stable bundles on $X$. Let $\cM$ be the irreducible component of the moduli scheme containing $[T_X]$.
\begin{itemize}
\item[{\bf Q3}] What is the geometric nature of the algebraic variety  $\cM$?
\end{itemize}
 Our goal is to study these three questions.
 The answer to {\bf Q1} is the following, to be proved in Section \ref{s.dim}.
 
\begin{theorem}\label{t.dim}
Let $X \subset \pp^n, n \geq 4,$ be a nonsingular projective hypersurface of degree $d \geq 2.$ Then
$h^1(X,  {\rm End}_0(T_X) )  = {n+d-1 \choose d} (d-1) $.
\end{theorem}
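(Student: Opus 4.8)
The plan is to compute $h^1(X, \End_0(T_X))$ by working through a chain of exact sequences relating $T_X$ to the restriction $T_{\pp^n}|_X$ and then to the Euler sequence on $\pp^n$. First I would use the normal bundle sequence
\[
0 \to T_X \to T_{\pp^n}|_X \to \cO_X(d) \to 0,
\]
tensored with the dual bundle $\Om^1_X = T_X^\vee$, to relate $\End(T_X) = T_X \otimes \Om^1_X$ to $T_{\pp^n}|_X \otimes \Om^1_X$ and $\Om^1_X(d)$. Dually, I would tensor the conormal sequence with $T_X$. Combining these, the computation of $H^\bullet(X, \End_0 T_X)$ reduces to understanding $H^\bullet(X, T_{\pp^n}|_X \otimes \Om^1_X)$ together with $H^\bullet(X, T_X(d))$ and $H^\bullet(X, \Om^1_X(d))$, plus the connecting maps. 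For the middle term I would further restrict the Euler sequence $0 \to \cO_{\pp^n} \to \cO_{\pp^n}(1)^{\oplus(n+1)} \to T_{\pp^n} \to 0$ to $X$ and tensor with $\Om^1_X$, which reduces everything to cohomology of $\Om^1_X(k)$ and $\Om^1_X$ for small values of $k$.

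**Carrying out the cohomology bookkeeping.**

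The cohomology groups $H^\bullet(X, \Om^p_X(k))$ for a smooth hypersurface are classically computable: one uses the Koszul/conormal resolution of $\Om^p_X$ in terms of $\Om^q_{\pp^n}(-jd)|$-type sheaves, or equivalently the exact sequences
\[
0 \to \Om^1_{\pp^n}|_X \to \Om^1_X \to 0 \quad\text{is wrong; rather}\quad 0 \to \cO_X(-d) \to \Om^1_{\pp^n}|_X \to \Om^1_X \to 0,
\]
together with Bott's formula for $H^\bullet(\pp^n, \Om^p_{\pp^n}(k))$ and the sequence $0 \to \Om^1_{\pp^n}(-d) \to \Om^1_{\pp^n} \to \Om^1_{\pp^n}|_X \to 0$. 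Since $n \geq 4$, the relevant vanishing theorems (Bott vanishing, plus the fact that $\Om^1_{\pp^n}$ has no intermediate cohomology in the needed range) kill most terms, and the surviving contribution should come from a single group whose dimension is the space of degree-$d$ forms counted with multiplicity $d-1$. I expect the number ${n+d-1 \choose d}(d-1)$ to emerge as follows: ${n+d-1 \choose d} = h^0(\pp^n, \cO(d)) - \text{(something)}$, or more likely as $\dim H^0(\pp^{n-1}, \cO(d))$-type count — I would track it as the dimension of $H^0$ of a twist of $\cO_X$ appearing with the factor $(d-1)$ coming from an iterated connecting-map / Koszul-differential computation (the $d-1$ strongly suggests the cohomology of $\cO_X(kd)$ for $k$ ranging over an interval of length $d-1$, or a $\mathrm{coker}$ of multiplication by the defining equation's partials).

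**The main obstacle.**

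The hard part will be controlling the connecting homomorphisms rather than the individual cohomology groups. The groups $H^i(X, \Om^p_X(k))$ are all known, but $\End_0(T_X)$ sits at the end of a two-step filtration (normal sequence tensored with $\Om^1_X$, then Euler sequence restricted), so $h^1$ is not simply an alternating sum — I must show that certain maps like the multiplication map $H^0(X,\cO_X(d)) \otimes H^0(\pp^n,\cO(1))^{n+1} \to \cdots$ or a connecting map into $H^1(X, \Om^1_X(d))$ have the expected rank (injective or surjective). Equivalently, one must identify the image of the trace-free part precisely; the trace splitting $\End(T_X) = \End_0(T_X) \oplus \cO_X$ helps, but isolating $\End_0$ inside the $T_{\pp^n}|_X \otimes \Om^1_X$ computation requires care. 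I anticipate that the key rank computation amounts to the statement that the Jacobian ideal of the defining polynomial $F$ behaves generically in the relevant degree — i.e. that multiplication $H^0(\cO(1))^{\oplus(n+1)} \to H^0(\cO(d))$ by the partials $\partial_i F$ is surjective onto the degree-$d$ piece, which holds because $X$ is smooth (the partials have no common zero) — and once this surjectivity and the injectivity of the dual map are established, the dimension count closes to give exactly ${n+d-1 \choose d}(d-1)$.
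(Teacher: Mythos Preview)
Your outline --- tensor the normal bundle sequence with $\Om_X$, then control $T_{\pp^n}|_X \otimes \Om_X$ via the restricted Euler sequence, the conormal sequence, and Bott's formula --- is exactly the route the paper takes. Where you go off course is in the ``main obstacle'' paragraph: the connecting maps are not controlled by any Jacobian-ideal or multiplication-by-partials argument, and no such map appears in the proof.

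What actually closes the computation is much softer. From the long exact sequence
\[
0 \to H^0(T_X \otimes \Om_X) \to H^0(T_{\pp^n}|_X \otimes \Om_X) \to H^0(\Om_X(d)) \to H^1(T_X \otimes \Om_X) \to H^1(T_{\pp^n}|_X \otimes \Om_X)
\]
one needs only: (a) $h^0(T_X \otimes \Om_X) = 1$, which is stability of $T_X$; and (b) $h^0(T_{\pp^n}|_X \otimes \Om_X) = 1$, $h^1(T_{\pp^n}|_X \otimes \Om_X) = 0$. For (b), the paper first shows $H^i(X,\End(T_{\pp^n})|_X) \cong H^i(X, T_{\pp^n}|_X \otimes \Om_X)$ for $i=0,1$ using the cotangent sequence and the vanishing $H^i(X, T_{\pp^n}(-d)|_X)=0$ for $i\le 2$ (Bott). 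Then it identifies $H^i(X,\End(T_{\pp^n})|_X)$ with $H^i(\pp^n, \End(T_{\pp^n}))$ via the restriction sequence, again using Bott to kill $H^i(\pp^n,\End(T_{\pp^n})(-d))$. The latter groups are $\cc$ for $i=0$ and $0$ for $i=1$ simply because $T_{\pp^n}$ is simple and infinitesimally rigid. No rank computation is required.

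Given (a) and (b), the long exact sequence collapses to an isomorphism $H^1(T_X \otimes \Om_X) \cong H^0(X,\Om_X(d))$. The number you were hunting for is then immediate: the cotangent sequence $0 \to \cO_X \to \Om_{\pp^n}|_X(d) \to \Om_X(d) \to 0$ together with $0 \to \Om_{\pp^n} \to \Om_{\pp^n}(d) \to \Om_{\pp^n}|_X(d) \to 0$ and $H^1(\pp^n,\Om_{\pp^n})=\cc$ give
\[
h^0(X,\Om_X(d)) \;=\; h^0(\pp^n, \Om_{\pp^n}(d)) \;=\; \binom{n+d-1}{d}(d-1),
\]
the last equality being Bott's formula with $i=0$, $j=1$, $k=d$. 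So the factor $(d-1)$ is not a Koszul length or a count of Jacobian relations; it is literally the $\binom{k-1}{j}=\binom{d-1}{1}$ factor in Bott's formula.
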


The obstruction to realizing an infinitesimal deformation in $H^1(X,  {\rm End}(T_X) )$ as an actual deformation of $T_X$ lies in $H^2(X,  {\rm End}(T_X) ) = H^2(X, T_X \otimes \Omega_X).$ Thus the answer to {\bf Q2} would have been straightforward if $H^2(X, T_X \otimes \Omega_X) =0.$  However, this group does not always vanish as  the following theorem says, which is proved in Section \ref{s.H2}.

\begin{theorem}\label{t.H2}
Let $X \subset \pp^n, n \geq 4,$ be a nonsingular projective hypersurface of degree $d \geq 2.$ \begin{itemize} \item[(i)] $H^2(X, T_X \otimes \Omega_X) =0$ if $n \geq 6$.
\item[(ii)] When $n=4$, 
\[
h^2( X, T_X \otimes \Omega_X) =
\begin{cases}
0,  &\text{if} \ d =2, 3\\
45,  &\text{if} \ d=4\\
224,  & \text{if} \ d=5.
\end{cases}
\] \end{itemize} \end{theorem}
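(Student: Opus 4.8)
The plan is to compute $H^{\bullet}(X,\End(T_X))=H^{\bullet}(X,T_X\otimes\Omega_X)$ by resolving this bundle through the normal bundle sequence, the Euler sequence and the restriction sequences from $\pp^n$, reducing everything to the cohomology of the twisted cotangent sheaves $\Omega_X(k)$ for $k\in\{0,1,d\}$, and thence to Bott's formula on $\pp^n$ and to the Jacobian ring of $X$. First I would tensor the normal bundle sequence and the Euler sequence of $X$ by $\Omega_X$ to get
\begin{equation*}
0\to T_X\otimes\Omega_X\to (T_{\pp^n}|_X)\otimes\Omega_X\to\Omega_X(d)\to 0
\end{equation*}
and $0\to\Omega_X\to\Omega_X(1)^{\oplus(n+1)}\to (T_{\pp^n}|_X)\otimes\Omega_X\to 0$. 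From the conormal sequence $0\to\mathcal O_X\to\Omega_{\pp^n}(d)|_X\to\Omega_X(d)\to 0$, the restriction sequences and Bott's formula one obtains $H^1(X,\Omega_X(d))\cong H^2(X,\mathcal O_X)$ and $H^2(X,\Omega_X(d))\cong H^3(X,\mathcal O_X)$; and, by the same type of argument, $H^2(X,\Omega_X(1))\cong H^3(X,\mathcal O_X(1-d))$ whenever $n\geq 5$.

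For $n\geq 6$ all of these vanish: $H^2(X,\mathcal O_X)=H^3(X,\mathcal O_X)=0$ by the Lefschetz hyperplane theorem, while $H^3(X,\mathcal O_X(1-d))$ is squeezed between $H^3(\pp^n,\mathcal O(1-d))=0$ and $H^4(\pp^n,\mathcal O(1-2d))=0$. Hence the first sequence gives $H^2(X,\End(T_X))\cong H^2(X,(T_{\pp^n}|_X)\otimes\Omega_X)$, and the second sequence sandwiches this group between $H^2(X,\Omega_X(1))^{\oplus(n+1)}=0$ and $H^3(X,\Omega_X)$; finally $H^3(X,\Omega_X)=H^{1,3}(X)=0$, since $\dim X=n-1\geq 5$ forces $H^4(X,\cc)\cong H^4(\pp^n,\cc)$, which is one-dimensional of Hodge type $(2,2)$. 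This proves (i).

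For $n=4$ (so $\dim X=3$) one again has $H^3(X,\Omega_X)=H^{1,3}(X)=0$, $H^1(X,\Omega_X(d))=H^2(X,\mathcal O_X)=0$, and, when $d\leq 4$, $H^2(X,\Omega_X(d))=H^3(X,\mathcal O_X)=H^0(X,\mathcal O_X(d-5))^{\vee}=0$; the two sequences then identify $H^2(X,\End(T_X))$ with the cokernel of the multiplication map $\alpha\colon H^2(X,\Omega_X)\to H^2(X,\Omega_X(1))^{\oplus 5}$, $[\omega]\mapsto(x_0\omega,\dots,x_4\omega)$. (For $d=5$ one argues directly: $K_X=\mathcal O_X$, so Serre duality and Theorem \ref{t.dim} give $h^2(X,\End(T_X))=h^1(X,\End(T_X))=224$.) By Serre duality, with $K_X=\mathcal O_X(d-5)$, the dual of $\alpha$ is the multiplication $\bigoplus_{i=0}^{4}H^1(X,T_X(d-6))\to H^1(X,T_X(d-5))$. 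Now from the normal and Euler sequences, together with the vanishing of $H^1(X,T_{\pp^4}(k)|_X)$ for $k\leq 0$, one identifies $H^1(X,T_X(k))\cong R^{k+d}$ for $k\leq 0$ --- the image of $H^0(X,T_{\pp^4}(k)|_X)$ in $H^0(X,\mathcal O_X(k+d))$ being exactly the relevant graded piece of the Jacobian ideal --- where $R=\cc[x_0,\dots,x_4]/(\partial_0F,\dots,\partial_4F)$ is the Jacobian ring of $X=V(F)$, and multiplication by $x_i$ corresponds to multiplication in $R$. Thus the dual of $\alpha$ is $\bigoplus_iR^{2d-6}\to R^{2d-5}$, which is surjective since $R$ is generated in degree one, so $\alpha$ is injective and $h^2(X,\End(T_X))=5\dim R^{2d-6}-\dim R^{2d-5}$. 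Because $X$ is smooth, $(\partial_0F,\dots,\partial_4F)$ is a regular sequence, hence below degree $2(d-1)$ the Jacobian ideal has the expected dimension; plugging in gives $0,0,45$ for $d=2,3,4$ (and $224$ for $d=5$), as claimed.

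The main obstacle is the $n=4$ case: pinning down the exact numbers forces one to control the connecting map $\alpha$ between two specific cohomology groups, and the cleanest route is Serre duality combined with the Jacobian-ring description of $H^1(X,T_X(k))$ in the negative twists needed --- one must check that this description, including its $\cc[x]$-module structure, really holds there. One must also verify that the relevant graded pieces of $R$ have the expected dimension, which uses smoothness of $X$ (so that $\partial_0F,\dots,\partial_4F$ is a regular sequence) and the bound $d\leq 5$ (so that these degrees stay below $2(d-1)$, where Koszul syzygies would enter). By contrast, all the vanishing statements underlying both parts are routine Bott-formula and Lefschetz bookkeeping.
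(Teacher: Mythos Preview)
Your argument for (i) is essentially the paper's: the same two short exact sequences, the same vanishing inputs (Bott, Lefschetz/Flenner), the same conclusion.

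For (ii), your route is genuinely different and correct. The paper computes $\chi(X,T_X\otimes\Omega_X)$ by Hirzebruch--Riemann--Roch (working out $ch(T_X)$, $ch(\Omega_X)$, $td(X)$ from Lemma~\ref{l.Chern}), then feeds in $h^0=1$ (stability), $h^3$ (Serre duality plus stability), and $h^1$ from Theorem~\ref{t.dim} to extract $h^2$ by arithmetic for each $d=2,3,4$; the case $d=5$ is done exactly as you do it, via $K_X=\cO_X$ and Serre duality. By contrast, for $d\le 4$ you stay inside the two long exact sequences, identify $H^2(\End T_X)$ with $\mathrm{coker}\,\alpha$, Serre-dualize $\alpha$ to the multiplication map $\bigoplus_i R^{2d-6}\to R^{2d-5}$ in the Jacobian ring, observe its surjectivity (since $R$ is generated in degree one), and read off $h^2=5\dim R^{2d-6}-\dim R^{2d-5}$ from the Koszul dimensions. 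What you gain: for $d\le 4$ you avoid both Riemann--Roch and any appeal to Theorem~\ref{t.dim}, and you obtain a closed formula that visibly reproduces the paper's conjectural value $(11d+1)\binom{d-1}{3}$ in this range (indeed your formula even yields $224$ at $d=5$, though you rightly do not rely on that). What the paper's approach gains is that it is purely numerical once $h^1$ is known, with no need to check the $\cc[x]$-module compatibility of the Serre-dual identification $H^1(T_X(k))\cong R^{k+d}$.
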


Consequently, {\bf Q2} is a subtler question for some values of $n$ and $d$. 
Nevertheless,  we answer {\bf Q2} affirmatively in Section \ref{s.Main} as follows for all $n \geq 4$ and $d \geq 2$  by an explicit construction of deformations of $T_X$, which also gives a partial answer to {\bf Q3}.

\begin{theorem}\label{t.Main}
Let $X \subset \pp^n, n \geq 4,$ be a nonsingular projective hypersurface of degree $d \geq 2.$ Then there is  an affine space $\bA$   with a base point $o \in \bA$ and a  torsionfree sheaf  $\mathbb{E}  \to X \times \bA$  which satisfy the following properties.
\begin{itemize}
\item[(i)]  $\dim \bA = {n+d-1 \choose d} (d-1)  $.
\item[(ii)]  The restriction $\mathbb{E}|_{X \times \{o\}} $  is isomorphic to the tangent bundle $T_X$.
 \item[(iii)]  There is a  Zarisky open subset $\bA^\circ \subset \bA$ with $o \in \bA^\circ$ such that for any $\alpha \in \bA^\circ$,
 the restriction $\mathbb{E}|_{X \times \{\alpha\}}$ is a vector bundle of determinant $\wedge^{n-1} ( \mathbb{E}|_{X \times \{\alpha\}}) \cong K^{-1}_X$.
\item[(iv)] There is a Zariski open subset $\bA^{\rm faithful} \subset \bA^\circ$ with $o \in \bA^{\rm faithful}$ such that for any $\alpha,\alpha' \in \bA^{\rm faithful}$, the two vector bundles $\mathbb{E}|_{X \times \{\alpha\}}$ and $\mathbb{E}|_{X \times \{\alpha'\}} $ are isomorphic only when $\alpha = \alpha'$.
 \item[(v)] There is a Zariski open subset $\bA^{\rm stable} \subset \bA^\circ$ with $o \in \bA^{\rm stable}$  such that for any $\alpha \in \bA^{\rm stable}$,  the restriction $\mathbb{E}|_{X \times \{\alpha\}} $ is a stable vector bundle.
\end{itemize}
Consequently, the moduli space $\cM$ is nonsingular at $[T_X]$ and it is a rational variety.
\end{theorem}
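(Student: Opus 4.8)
The plan is to realize $\mathbb{E}$ as a family of kernels of surjections onto $\cO_X(d)$. Write $f \in H^0(\pp^n, \cO(d))$ for the equation of $X$; from the Euler sequence $0 \to \cO_X \to \cO_X(1)^{\oplus(n+1)} \to T_{\pp^n}|_X \to 0$ and the identity $\sum_i x_i\, \partial_i f = d\cdot f$, the homomorphism $(a_i) \mapsto \sum_i a_i\, \partial_i f \colon \cO_X(1)^{\oplus(n+1)} \to \cO_X(d)$ descends to a surjection $\psi_o \colon T_{\pp^n}|_X \to \cO_X(d)$ with $\ker \psi_o = T_X$. I would take
\[
\bA := \Bigl\{ (h_0, \dots, h_n) \in H^0(\pp^n, \cO(d-1))^{\oplus(n+1)} \ : \ \textstyle\sum_i x_i h_i = d\cdot f \Bigr\},
\]
an affine space with base point $o = (\partial_0 f, \dots, \partial_n f)$, and, writing $p_X$ for the projection $X \times \bA \to X$, let $\Psi \colon p_X^*(T_{\pp^n}|_X) \to p_X^*(\cO_X(d))$ be the homomorphism with universal coefficients $(h_i)$ --- which descends from $p_X^*(\cO_X(1)^{\oplus(n+1)})$ precisely because $\sum_i x_i h_i \equiv d f$ --- and set $\mathbb{E} := \ker \Psi$. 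As a subsheaf of a locally free sheaf on the integral scheme $X \times \bA$, $\mathbb{E}$ is torsionfree, and $\mathbb{E}|_{X \times \{o\}} = \ker \psi_o = T_X$, which gives (ii). For (i) one computes $\dim \bA = (n{+}1)\binom{n+d-1}{n} - \binom{n+d}{n} = (d-1)\binom{n+d-1}{d}$, using $H^0(X, \cO_X(d-1)) = H^0(\pp^n, \cO(d-1))$ and the surjectivity of $(g_i) \mapsto \sum_i x_i g_i$ onto $H^0(\pp^n, \cO(d))$.

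Let $\bA^\circ \subset \bA$ be the locus where $h_0(\alpha), \dots, h_n(\alpha)$ have no common zero on $X$; this is Zariski open (the common-zero locus is closed in $X \times \bA$ and $X$ is proper), it contains $o$ because $X$ is nonsingular, and over it $\Psi$ is a surjection of vector bundles, so $\mathbb{E}|_{X \times \bA^\circ}$ is a vector bundle, flat over $\bA^\circ$, with fibrewise determinant $\det(T_{\pp^n}|_X) \otimes \cO_X(-d) = \cO_X(n{+}1{-}d) = K_X^{-1}$; this is (iii). Writing $\mathbb{E}_\alpha := \mathbb{E}|_{X \times \{\alpha\}}$, the key point for (iv) is the identity $\Hom(T_X, T_{\pp^n}|_X) = \cc$: applying $\Hom(T_X, -)$ to the Euler sequence and using the elementary vanishings $H^0(X, \Omega_X) = H^0(X, \Omega_X(1)) = 0$ (from the conormal and Euler sequences on $X$) gives $\Hom(T_X, T_{\pp^n}|_X) \hookrightarrow \mathrm{Ext}^1(T_X, \cO_X) = H^1(X, \Omega_X)$, which is one-dimensional by the Lefschetz hyperplane theorem ($h^{1,1}(X) = 1$ as $n \geq 4$), and the inclusion $T_X \hookrightarrow T_{\pp^n}|_X$ is a nonzero element of it. By semicontinuity of $h^0$ along the flat family $\mathbb{E}|_{X \times \bA^\circ}$ of vector bundles, the locus $\bA^{\rm faithful} \subset \bA^\circ$ where $\dim \Hom(\mathbb{E}_\alpha, T_{\pp^n}|_X) = 1$ is Zariski open and contains $o$; on it, an isomorphism $\mathbb{E}_\alpha \cong \mathbb{E}_{\alpha'}$ composed with $\mathbb{E}_{\alpha'} \hookrightarrow T_{\pp^n}|_X$ is a nonzero scalar multiple of $\mathbb{E}_\alpha \hookrightarrow T_{\pp^n}|_X$, so $\mathbb{E}_\alpha = \mathbb{E}_{\alpha'}$ as subsheaves of $T_{\pp^n}|_X$; hence $\psi_\alpha$ and $\psi_{\alpha'}$ agree up to a scalar, which the normalization $\sum_i x_i h_i = d f$ forces to be $1$, whence $\alpha = \alpha'$. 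This is (iv). Finally $T_X$ is stable by \cite[Corollary 0.3]{PW} and stability is open in the flat family over $\bA^\circ$, so the open locus $\bA^{\rm stable} \subset \bA^\circ$ of stable fibres satisfies (v).

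For the concluding assertion, I would first check that the Kodaira--Spencer map $\kappa \colon T_o \bA \to \mathrm{Ext}^1(T_X, T_X)$ of $\mathbb{E}$ at $o$ is an isomorphism. It factors as $\partial \circ r$, where $r \colon T_o \bA \to \Hom(T_X, \cO_X(d))$ sends a tangent vector $(\dot h_i)$ (so $\sum_i x_i \dot h_i = 0$) to the restriction to $T_X = \ker \psi_o$ of the homomorphism $T_{\pp^n}|_X \to \cO_X(d)$ induced by $(\dot h_i)$, and $\partial \colon \Hom(T_X, \cO_X(d)) \to \mathrm{Ext}^1(T_X, T_X)$ is the connecting map of $0 \to T_X \to T_{\pp^n}|_X \to \cO_X(d) \to 0$. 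Here $r$ is injective (if its value vanishes then the induced map equals $c\psi_o$, i.e.\ $\dot h_i = c\,\partial_i f$, and $\sum_i x_i \dot h_i = 0$ forces $cdf = 0$, so $c = 0$), $\partial$ is injective (its kernel is the image of $\Hom(T_X, T_{\pp^n}|_X) = \cc \cdot (\text{the inclusion})$, which maps to $0$ in $\Hom(T_X, \cO_X(d))$), and all three spaces have the same dimension $(d-1)\binom{n+d-1}{d}$ --- for $\Hom(T_X, \cO_X(d)) = H^0(X, \Omega_X(d))$ by a cohomology count parallel to that for $\dim \bA$, and for $\mathrm{Ext}^1(T_X, T_X) = H^1(X, \End(T_X))$ by Theorem \ref{t.dim} (using $H^1(X, \cO_X) = 0$). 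Hence $\kappa$ is an isomorphism, so $\mathbb{E}$ realizes a smooth $(d-1)\binom{n+d-1}{d}$-dimensional family of deformations of $T_X$ whose Kodaira--Spencer map at $o$ is an isomorphism onto the Zariski tangent space $\mathrm{Ext}^1(T_X, T_X)$ of the moduli scheme at $[T_X]$; therefore the moduli scheme is nonsingular at $[T_X]$ of that dimension, $\cM$ is the unique component through $[T_X]$, and $\mathbb{E}|_{X \times \bA^{\rm stable}}$ induces a morphism $\Phi \colon \bA^{\rm stable} \to \cM$, étale at $o$, with $\Phi(o) = [T_X]$. Now $U := \bA^{\rm stable} \cap \bA^{\rm faithful}$ is a nonempty open subset of the affine space $\bA$; by (iv) the morphism $\Phi|_U$ is injective, and it is dominant since $\Phi$ is étale at $o \in U$ and $\cM$ is irreducible. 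Being injective and dominant, $\Phi|_U$ is birational onto $\cM$ (in characteristic zero). Hence $\cM$ is birational to an open subset of an affine space, so it is rational, and it is nonsingular at $[T_X]$.

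I expect the most delicate point to be property (iv): the argument rests on identifying the auxiliary group $\Hom(T_X, T_{\pp^n}|_X)$ as exactly one-dimensional --- so that the subbundle $\mathbb{E}_\alpha \subset T_{\pp^n}|_X$, and hence $\alpha$ itself via the chosen normalization, is intrinsic to the isomorphism type of $\mathbb{E}_\alpha$ --- and on verifying that the semicontinuity locus on which this persists genuinely contains the base point $o$. (As a byproduct, the Kodaira--Spencer computation above shows that every infinitesimal deformation of $T_X$ is unobstructed, though this is not needed for the statement at hand.)
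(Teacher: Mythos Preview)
Your proof is correct, and the construction of $\bA$ and $\mathbb{E}$ is the same as the paper's, just written in coordinates: the paper takes $\bA$ to be the kernel of the symmetrization map $\Sym^{d-1}V^*\otimes V^*\to\Sym^dV^*$ and uses $q+\alpha$ to define $\Phi$, which is precisely your affine space $\{(h_i):\sum x_ih_i=df\}$ after identifying $(h_i)$ with an element of $\Sym^{d-1}V^*\otimes V^*$. Parts (i)--(iii) and (v) are proved identically.

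The interesting difference is in (iv). The paper argues via \emph{extensions}: it shows $h^1(X,E^{\alpha}(-d))=1$ on an open set (using $h^1(X,T_X(-d))=1$, semicontinuity, and the observation that $h^1=0$ would split the sequence and contradict simplicity of $T_{\pp^n}|_X$), so $T_{\pp^n}|_X$ is the \emph{unique} nontrivial extension of $\cO_X(d)$ by $E^{\alpha}$; an isomorphism $E^{\alpha}\cong E^{\alpha'}$ then extends to an automorphism of $T_{\pp^n}|_X$, necessarily a homothety. You argue instead via \emph{embeddings}: you show $\dim\Hom(E^{\alpha},T_{\pp^n}|_X)=1$ on an open set (using $\Hom(T_X,T_{\pp^n}|_X)=\cc$, which you deduce from $h^{1,1}(X)=1$; the paper proves the same fact by a different route in its Lemma on $\End(T_{\pp^n})|_X$), so the inclusion $E^{\alpha}\hookrightarrow T_{\pp^n}|_X$ is unique up to scalar. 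Both approaches then finish with the same normalization trick. Your route is slightly more direct, as it bypasses the computation of $h^1(X,T_X(-d))$.

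Your explicit Kodaira--Spencer computation for the concluding assertion is a genuine addition: the paper states the consequence but does not spell out why injectivity of $\bA^{\rm faithful}\cap\bA^{\rm stable}\to\cM$ together with $\dim\bA=h^1(X,\End(T_X))$ forces smoothness and birationality. Your factorization $\kappa=\partial\circ r$ and the dimension count make this transparent.
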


What happens when $n=3$ and $X$ is a surface?
A construction analogous to $\bA$ and $\mathbb{E}$ of Theorem \ref{t.Main} works even when $n=3$. However, the statement of Theorem \ref{t.dim}  does not hold when $n=3$. In fact, by a general theory, the moduli space $\cM$ at the point $[T_X]$ has dimension at least $h^1(X, T_X\otimes \Omega_X)-h^2(X, T_X\otimes \Omega_X)$. One can check (see Proposition \ref{p.n=3})  that when $n=3$ and $d \geq 3$,  $$h^1(X, T_X\otimes \Omega_X)-h^2(X, T_X\otimes \Omega_X)=\frac{1}{3}(d-1)(7d^2-5d-3),$$ which is strictly bigger than $\dim \bA=\frac{1}{2}(d-1)(d^2+3d+2)$. Thus one needs new ideas to study {\bf Q1}, {\bf Q2} and {\bf Q3} for surfaces in $\pp^3$.

We would like to comment on the cases missing in Theorem \ref{t.H2}.  When $n=4$,  we expect for all $d \geq 4,$\begin{equation}\label{eq:ob4d}
h^2(X, T_X\otimes \Omega_X) =(11d+1) {d-1 \choose 3}.
\end{equation}  
This was confirmed for the Fermat  hypersurfaces for $4 \le d \le 25$ by using the Macaulay2 program(\cite{GS}).
Also this agrees with Theorem \ref{t.H2} (ii) for $d= 4,5$. We have not been able to verify this for $d \geq 6$.  When $n=5$,  similarly we checked for Fermat hyperfaces of degree $d$ that $h^2(X, T_X \otimes  \Omega_X) =1$ for $d=3$ and $h^2(X, T_X \otimes  \Omega_X) =0$ for $4 \le d \le 25$.


To close the introduction, let us mention that
 the third-named author erroneously asserted in \cite[page 77]{Hw} that the tangent bundle of an irreducible Hermitian symmetric space cannot be deformed. This contradicts Theorem \ref{t.Main} for the quadric hypersurface, which is an irreducible Hermitian symmetric space of type IV.  The flaw was in the limiting argument in the proof of \cite[Proposition 2.1]{Hw}. Consequently, the main result Theorem 1.2 of \cite{Hw} is incorrect. It is  expected  that the deformation space of the tangent bundle of an irreducible Hermitian symmetric space, other than projective space, has positive dimension (see \cite[Question 1.6]{P}). It would be interesting to find an explicit birational model of the deformation space, generalizing Theorem \ref{t.Main}.

\section{Dimension of $H^1(X, T_X \otimes \Omega_X)$}\label{s.dim}
In this section,   we present a number of  cohomological computations associated to the tangent bundle $T_X$ of a smooth projective hypersurface $X \subset \pp^n$, including a proof of Theorem \ref{t.dim}.

First we recall the following result, Bott's formula (for example, \cite[page 8]{OSS}).

\begin{proposition}\label{p.Bott}
\[
h^i( \pp^n,  \Om^j_{\pp^n} (k)) =
\begin{cases}
{n+k-j \choose k} {k-1 \choose j},  &\text{if} \ i=0,  0 \le j \le n,  k >j\\
{j-k \choose -k} {-k-1 \choose n-j},  &\text{if} \ i=n,  0 \le j \le n,  k< j-n\\
1,  & \text{if} \ k=0,  i=j\\
0,  &\text{otherwise}.
\end{cases}
\]
\end{proposition}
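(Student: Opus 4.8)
The statement is classical, but here is the route I would take; the only external input I would assume is the cohomology of line bundles on $\pp^m$, namely $h^0(\pp^m,\mathcal{O}(\ell))=\binom{m+\ell}{\ell}$ for $\ell\ge 0$, $h^m(\pp^m,\mathcal{O}(\ell))=\binom{-\ell-1}{m}$ for $\ell\le -m-1$, and all other $h^i=0$ (this is itself a short \v{C}ech computation on the standard affine cover, or may be taken as known). The two base cases of the induction are: $j=0$, where $\Omega^0_{\pp^n}(k)=\mathcal{O}_{\pp^n}(k)$ and the claim is exactly the line bundle formula; and $j=n$, where $\Omega^n_{\pp^n}\cong\mathcal{O}_{\pp^n}(-n-1)$. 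Moreover Serre duality $H^i(\pp^n,\Omega^j_{\pp^n}(k))^\vee\cong H^{n-i}(\pp^n,\Omega^{n-j}_{\pp^n}(-k))$ interchanges the two nonzero lines of the displayed formula, so it suffices to settle, say, the groups in degrees $i\le n/2$ together with $i=0$, the top-degree groups then coming for free.

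For the inductive step I would run a double induction on the pair $(n,j)$ via two families of short exact sequences. First, taking exterior powers in the dual Euler sequence $0\to\Omega^1_{\pp^n}\to V\otimes\mathcal{O}_{\pp^n}(-1)\to\mathcal{O}_{\pp^n}\to 0$ (with $V\cong\cc^{n+1}$) and twisting by $\mathcal{O}(k)$ gives
\[
0\to \Omega^j_{\pp^n}(k)\to \textstyle\binom{n+1}{j}\,\mathcal{O}_{\pp^n}(k-j)\to \Omega^{j-1}_{\pp^n}(k)\to 0,
\]
which relates $\Omega^j(k)$ to $\Omega^{j-1}(k)$ and a sum of line bundles. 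Second, restriction to a hyperplane $H=\pp^{n-1}$ gives $0\to\Omega^j_{\pp^n}(k-1)\to\Omega^j_{\pp^n}(k)\to \Omega^j_{\pp^n}|_H(k)\to 0$ together with the exterior power of the conormal sequence $0\to\Omega^{j-1}_H(k-1)\to\Omega^j_{\pp^n}|_H(k)\to\Omega^j_H(k)\to 0$; these lower either $n$ or $j$, with $|k|$ controlled by an auxiliary descending induction (and its Serre dual for $k$ large). Chasing the long exact sequences, one checks that for $0<i<n$ the group $H^i(\pp^n,\Omega^j_{\pp^n}(k))$ is forced to vanish unless $i=j$ and $k=0$, while the surviving groups in degrees $0$ and $n$ have the asserted dimensions; the binomial identities that surface (Vandermonde type) are routine.

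The point that genuinely needs care — and the step I expect to be the main obstacle — is controlling the connecting homomorphisms, i.e.\ showing that the contraction map $H^0(\pp^n,\wedge^j V\otimes\mathcal{O}(k-j))\to H^0(\pp^n,\Omega^{j-1}_{\pp^n}(k))$ (and its analogue in top degree) has exactly the expected rank, so that no unforeseen cancellation occurs and the two nonzero cohomology groups never collide. I would handle this either concretely, by writing global sections of $\Omega^j_{\pp^n}(k)$ as polynomial $j$-forms $\sum_I f_I\,dx_I$ of the appropriate homogeneous degree modulo the Euler relation and computing the maps in coordinates, or — more conceptually — by Borel--Weil--Bott: $\Omega^j_{\pp^n}(k)$ is the homogeneous bundle on $\pp^n=GL_{n+1}/P$ attached to an explicit $P$-representation, Bott's theorem then guarantees that at most one cohomology group is nonzero and identifies it as an irreducible $GL_{n+1}$-module indexed by a hook-shaped Young diagram, whose dimension is precisely the displayed product of binomials. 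The representation-theoretic route makes the ``exactly one nonzero group'' phenomenon transparent and bypasses the delicate rank computations, at the cost of setting up the Bott algorithm; the elementary route is self-contained but requires the bookkeeping above.
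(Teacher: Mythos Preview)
The paper does not prove this statement at all: it is introduced with the phrase ``First we recall the following result, Bott's formula (for example, \cite[page 8]{OSS})'' and is simply cited from the literature, with no argument given. So there is no proof in the paper to compare your proposal against.

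That said, your outline is a faithful sketch of the standard elementary approach (exterior powers of the Euler sequence combined with restriction to hyperplanes, plus Serre duality), and you are right that the delicate point in that route is the surjectivity/rank of the contraction maps on global sections. Your alternative via Borel--Weil--Bott is also correct and is in fact the origin of the name ``Bott's formula''. Either route would produce a complete proof if the details were filled in; for the purposes of this paper, however, the authors are content to quote the result as known.
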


From this, we can show:

\begin{lemma} \label{endT}
For $0 \le i \le n-2$ and $d \ge 2$, we have $\dim H^i(\pp^n,   \End T_{\pp^n}(-d)) =0$.
\end{lemma}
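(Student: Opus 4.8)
The plan is to reduce everything to Bott's formula (Proposition \ref{p.Bott}) by means of the Euler sequence. Since $T_{\pp^n}$ is locally free we have a canonical isomorphism $\End T_{\pp^n} \cong T_{\pp^n} \otimes \Omega_{\pp^n}$, so the assertion is that $H^i(\pp^n, T_{\pp^n}\otimes \Omega_{\pp^n}(-d)) = 0$ for $0 \le i \le n-2$ and $d \ge 2$.

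First I would tensor the Euler sequence $0 \to \cO_{\pp^n} \to \cO_{\pp^n}(1)^{\oplus(n+1)} \to T_{\pp^n} \to 0$ by $\Omega_{\pp^n}(-d)$, obtaining the short exact sequence
\[
0 \longrightarrow \Omega_{\pp^n}(-d) \longrightarrow \Omega_{\pp^n}(1-d)^{\oplus(n+1)} \longrightarrow T_{\pp^n}\otimes \Omega_{\pp^n}(-d) \longrightarrow 0 .
\]
The associated long exact cohomology sequence supplies, for each $i$, the exact fragment
\[
H^i(\pp^n, \Omega_{\pp^n}(1-d))^{\oplus(n+1)} \longrightarrow H^i(\pp^n, T_{\pp^n}\otimes \Omega_{\pp^n}(-d)) \longrightarrow H^{i+1}(\pp^n, \Omega_{\pp^n}(-d)) .
\]
Then I would apply Bott's formula with $j = 1$ to the bundles $\Omega_{\pp^n}(1-d)$ and $\Omega_{\pp^n}(-d)$. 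Because $d \ge 2$, the twists $1-d \le -1$ and $-d \le -2$ are strictly negative; in particular neither equals $0$ and neither exceeds $j=1$, so the only clause of Proposition \ref{p.Bott} that can contribute is the one with $i = n$. Hence $H^i(\pp^n, \Omega_{\pp^n}(1-d)) = 0$ and $H^i(\pp^n, \Omega_{\pp^n}(-d)) = 0$ for every $i < n$. Substituting into the fragment above: for $0 \le i \le n-2$ the left-hand term vanishes (as $i < n$) and the right-hand term $H^{i+1}(\pp^n, \Omega_{\pp^n}(-d))$ vanishes (as $i+1 \le n-1 < n$), so the middle term is squeezed to zero, which is exactly the claim.

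The argument is essentially bookkeeping, and I expect no serious obstacle; the one point that needs care is the precise cohomological range. At $i = n-1$ the squeeze fails, since $H^n(\pp^n, \Omega_{\pp^n}(-d))$ is genuinely nonzero once $d$ is large, which is the reason the statement stops at $i = n-2$. I would also remark that the same vanishing can be obtained by instead tensoring the dual Euler sequence $0 \to \Omega_{\pp^n} \to \cO_{\pp^n}(-1)^{\oplus(n+1)} \to \cO_{\pp^n} \to 0$ by $T_{\pp^n}(-d)$ and using $T_{\pp^n}(m) \cong \Omega^{n-1}_{\pp^n}(n+1+m)$ together with Proposition \ref{p.Bott} for $j = n-1$; either route is short.
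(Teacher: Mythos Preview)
Your proof is correct and follows essentially the same route as the paper: tensor the Euler sequence by $\Omega_{\pp^n}(-d)$ and apply Bott's formula to the flanking terms. The paper states this in one line, while you spell out the cohomological bookkeeping; the content is identical.
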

\begin{proof}
This follows from Proposition \ref{p.Bott} combined with the Euler sequence tensored by $\Om_{\pp^n}(-d)$:
\[
0 \to \Om_{\pp^n}(-d) \to  \Om_{\pp^n}(1-d)^{\oplus (n+1)} \to \End T_{\pp^n}(-d) \to 0.
\]
\end{proof}

Recall the following from \cite[Satz 8.11]{F}.

\begin{proposition}\label{special}
For a nonsingular projective hypersurface $X \subset \pp^n$ and each  $0 < i < n-1$,  we have
\[
\begin{cases}
H^i(X, \Om_X^j(k) )=0,  &\text{if} \ i+j \neq n-1, k \neq 0\\
H^i(X, \Om_X^j) = 0, & \text{if} \ i+j \neq n-1,  i \neq j \\
H^i(X, \Om_X^i) = \cc,  & \text{if} \ 2i \neq n-1.
\end{cases}
\]
\end{proposition}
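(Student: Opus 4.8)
The plan is to propagate Bott's formula (Proposition~\ref{p.Bott}) from $\pp^n$ to $X=\{F=0\}$ through the two exact sequences of a smooth degree-$d$ hypersurface. Tensoring $0\to\cO_{\pp^n}(-d)\to\cO_{\pp^n}\to\cO_X\to 0$ with the locally free sheaf $\Omega^j_{\pp^n}(k)$ gives the restriction sequence
\[
0\to\Omega^j_{\pp^n}(k-d)\to\Omega^j_{\pp^n}(k)\to\Omega^j_{\pp^n}(k)|_X\to 0,
\]
and the $j$-th exterior step of the conormal sequence $0\to\cO_X(-d)\to\Omega_{\pp^n}|_X\to\Omega_X\to 0$ gives
\[
0\to\Omega^{j-1}_X(k-d)\to\Omega^j_{\pp^n}(k)|_X\to\Omega^j_X(k)\to 0.
\]
I would first clear away two large ranges by soft means and then treat what remains by induction on $j$ using these two sequences.

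For $k=0$, the Lefschetz hyperplane theorem gives $H^i(X,\cc)\cong H^i(\pp^n,\cc)$ for $i<\dim X=n-1$; compatibility with the Hodge decomposition forces $H^q(X,\Omega^p_X)=\cc^{\,\delta_{pq}}$ for $p+q<n-1$, and Serre duality $H^q(X,\Omega^p_X)\cong H^{n-1-q}(X,\Omega^{n-1-p}_X)^\vee$ then covers $p+q>n-1$ with $0<q<n-1$. For $k\ne 0$, Kodaira--Akizuki--Nakano vanishing (with the ample bundle $\cO_X(1)$) kills $H^i(X,\Omega^j_X(k))$ when $k>0$, $i+j>n-1$ and when $k<0$, $i+j<n-1$; Serre duality interchanges the two remaining quadrants $\{k>0,\,i+j<n-1\}$ and $\{k<0,\,i+j>n-1\}$, so it suffices to handle $\{k>0,\,i+j<n-1\}$.

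For that range I induct on $j\ge 0$. The base case $j=0$ is immediate from the restriction sequence, since $H^i(\pp^n,\cO_{\pp^n}(m))=0$ for $0<i<n$. In the inductive step, the restriction sequence and Proposition~\ref{p.Bott} give $H^i(X,\Omega^j_{\pp^n}(k)|_X)=0$ for $0<i<n-1$, $k\ne 0$, the only surviving ambient term being one copy of $H^j(\pp^n,\Omega^j_{\pp^n})=\cc$ in the resonant case $k=d$, $i=j-1$ (where $H^{j-1}(X,\Omega^j_{\pp^n}(d)|_X)\cong\cc$). The conormal sequence then expresses $H^i(X,\Omega^j_X(k))$ in terms of the flanking groups $H^i(X,\Omega^{j-1}_X(k-d))$ and $H^{i+1}(X,\Omega^{j-1}_X(k-d))$; since the twist has dropped by $d$, these are controlled by the inductive hypothesis when $k>d$, by Kodaira--Akizuki--Nakano when $k<d$, and by the $k=0$ case when $k=d$, and a short check shows they both vanish unless $k=d$ and $i\in\{j-1,j-2\}$. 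Hence off the slices $k=d$ everything vanishes at once, and the whole induction collapses to the two diagrams sitting at $k=d$, $i=j-1$ and $k=d$, $i=j-2$.

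The main obstacle is exactly these two residual diagrams. In each, every group that enters is either $0$ or one copy of $\cc$ spanned by a power of the hyperplane class, and the point is that a specific connecting homomorphism --- morally, wedge with the conormal section $dF|_X$, equivalently cup product with $[X]$ --- must be nonzero, something dimension counts cannot see. I would settle it by identifying $H^{j-1}(X,\Omega^{j-1}_X)$ and $H^{j-1}(X,\Omega^j_{\pp^n}(d)|_X)$ with $H^{j-1}(\pp^n,\Omega^{j-1}_{\pp^n})$ and $H^{j}(\pp^n,\Omega^j_{\pp^n})$ (by the same exact-sequence argument applied in $\pp^n$) and checking that, under these identifications, the composite of the connecting map with the isomorphism onto $H^j(\pp^n,\Omega^j_{\pp^n})$ is, up to nonzero scalars, the Gysin map of $X\hookrightarrow\pp^n$ applied to $\eta^{j-1}$, i.e. multiplication by $[X]=d\eta$, which is injective since $d\ne 0$. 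Making this functoriality precise --- and, upstream, pinning down the exact exceptional locus of triples $(i,j,k)$ so that the inductive hypothesis is strong enough to reproduce itself --- is where the real work lies; everything else is formal cohomology chasing.
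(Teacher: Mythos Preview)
The paper does not prove this proposition at all: it is quoted verbatim from Flenner \cite[Satz 8.11]{F}, so there is no argument in the paper to compare yours against. What you have written is an independent proof sketch.

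Your overall architecture is correct and is essentially the classical route. The reductions via the Lefschetz hyperplane theorem (for $k=0$), Kodaira--Akizuki--Nakano (for the two ``easy'' quadrants when $k\neq 0$), and Serre duality (to reach the remaining quadrant) are all valid, and the induction on $j$ through the restriction and conormal sequences goes through exactly as you describe when $k\neq d$: Bott kills the ambient term, and the flanking terms $H^{\bullet}(X,\Omega^{j-1}_X(k-d))$ vanish by the inductive hypothesis ($k>d$) or by KAN ($k<d$).

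You have also correctly located the single nontrivial point: both residual cases $k=d$, $i\in\{j-2,j-1\}$ reduce to the nonvanishing of the map
\[
H^{j-1}(X,\Omega^{j-1}_X)\ \longrightarrow\ H^{j-1}\bigl(X,\Omega^{j}_{\pp^n}(d)|_X\bigr)
\]
induced by the twisted conormal inclusion. Your instinct that, after composing with the (isomorphic) connecting map of the restriction sequence, this becomes the Gysin pushforward $i_*$ is right; the clean justification is the morphism of short exact sequences from the residue sequence
\[
0\to\Omega^{j}_{\pp^n}\to\Omega^{j}_{\pp^n}(\log X)\to i_*\Omega^{j-1}_X\to 0
\]
into the restriction sequence $0\to\Omega^{j}_{\pp^n}\to\Omega^{j}_{\pp^n}(d)\to i_*\bigl(\Omega^{j}_{\pp^n}(d)|_X\bigr)\to 0$, where the middle arrow is the inclusion $\Omega^{j}_{\pp^n}(\log X)\subset\Omega^{j}_{\pp^n}(X)$ and the right arrow is precisely your conormal inclusion. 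The connecting map of the residue sequence is the Gysin map, which sends $h_X^{j-1}$ to $d\cdot h^{j}\neq 0$; commutativity then forces your map to be nonzero. Once you write this square down explicitly, the ``real work'' you flag disappears and the argument is complete.
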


\begin{lemma} \label{basic}
For a nonsingular projective  hypersurface $X \subset \pp^n, n \geq 4$ of degree $d \ge 2$, the following holds.
\begin{itemize}
\item[(i)] $h^0(X,  \Om_X(d) ) = {n+d-1 \choose d} (d-1)$.  
\item[(ii)] $h^i (X,  \Om_X(k)) =0$ for $0 < i < n-2$ and $k \neq 0$.
\item[(iii)] $h^i(X, T_{\pp^n}(-d)|_X)  =0$ for $i=0,1,2$.
\item[(iv)]  $h^1(X,  T_X(-d)) = 1$.
\end{itemize}
\end{lemma}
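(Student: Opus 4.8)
My plan is to reduce all four assertions to Bott's formula (Proposition \ref{p.Bott}) on $\pp^n$ by means of the standard exact sequences relating $X$ to $\pp^n$. Part (ii) needs nothing further: if $0 < i < n-2$, then in particular $0 < i < n-1$ and $i+1 \neq n-1$, so taking $j=1$ in Proposition \ref{special} gives $H^i(X, \Omega_X(k)) = 0$ for every $k \neq 0$.

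For (iii) I would tensor the ideal sheaf sequence $0 \to \cO_{\pp^n}(-d) \to \cO_{\pp^n} \to \cO_X \to 0$ by the locally free sheaf $T_{\pp^n}(-d)$ to obtain $0 \to T_{\pp^n}(-2d) \to T_{\pp^n}(-d) \to T_{\pp^n}(-d)|_X \to 0$, and chase its long exact sequence: it suffices to show $H^i(\pp^n, T_{\pp^n}(-d)) = 0$ for $i = 0,1,2$ and $H^i(\pp^n, T_{\pp^n}(-2d)) = 0$ for $i = 1,2,3$. Using $T_{\pp^n} \cong \Omega_{\pp^n}^{n-1}(n+1)$, these become statements about $H^i(\pp^n, \Omega_{\pp^n}^{n-1}(k))$ with $k = n+1-d$ or $k = n+1-2d$; since $d \geq 2$ and $n \geq 4$, for the indices in question $k$ is never $0$, $k < n$, and $i$ is never $n$, so Bott's formula forces every one of these groups to vanish. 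This is exactly where $n \geq 4$ enters: for $n = 3$ the term $H^{n-1}(\pp^n, T_{\pp^n}(-2d))$ would no longer be forced to vanish, which fits the later remark that the surface case behaves differently.

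Part (iv) then follows from (iii): twisting the normal bundle sequence $0 \to T_X \to T_{\pp^n}|_X \to \cO_X(d) \to 0$ by $\cO_X(-d)$ yields $0 \to T_X(-d) \to T_{\pp^n}(-d)|_X \to \cO_X \to 0$, and since (iii) kills $H^0$ and $H^1$ of the middle term, the long exact sequence collapses to $0 \to H^0(X, T_X(-d)) \to 0 \to H^0(X, \cO_X) \to H^1(X, T_X(-d)) \to 0$, whence $h^1(X, T_X(-d)) = h^0(X, \cO_X) = 1$ (and $h^0(X, T_X(-d)) = 0$ as a byproduct). For (i) I would use the conormal sequence $0 \to \cO_X(-d) \to \Omega_{\pp^n}|_X \to \Omega_X \to 0$ twisted by $\cO_X(d)$, i.e. $0 \to \cO_X \to \Omega_{\pp^n}(d)|_X \to \Omega_X(d) \to 0$; since $H^1(X, \cO_X) = 0$ for a hypersurface in $\pp^n$ with $n \geq 3$, this gives $h^0(X, \Omega_X(d)) = h^0(X, \Omega_{\pp^n}(d)|_X) - 1$. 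Then the restriction sequence $0 \to \Omega_{\pp^n} \to \Omega_{\pp^n}(d) \to \Omega_{\pp^n}(d)|_X \to 0$, combined with $H^0(\pp^n, \Omega_{\pp^n}) = 0$, $H^1(\pp^n, \Omega_{\pp^n}) = \cc$ and $H^1(\pp^n, \Omega_{\pp^n}(d)) = 0$ (all from Bott), gives $h^0(X, \Omega_{\pp^n}(d)|_X) = h^0(\pp^n, \Omega_{\pp^n}(d)) + 1$, while Bott evaluates $h^0(\pp^n, \Omega_{\pp^n}^1(d)) = {n+d-1 \choose d}(d-1)$; combining the three identities gives (i).

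All of this is bookkeeping with exact sequences and Bott's formula. The only point that demands genuine care is the case analysis in (iii): one must check that none of the auxiliary groups $H^i(\pp^n, \Omega_{\pp^n}^{n-1}(k))$ falls into the exceptional "diagonal" case $k = 0$, $i = n-1$ of Bott's formula, and it is precisely this check that pins down the hypothesis $n \geq 4$.
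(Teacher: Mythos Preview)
Your argument is correct. Parts (i)--(iii) proceed essentially as in the paper: the same conormal/restriction sequences for (i), the same appeal to Proposition~\ref{special} for (ii), and the same ideal-sheaf sequence together with $T_{\pp^n}\cong\Omega_{\pp^n}^{n-1}(n+1)$ and Bott's formula for (iii). The one genuine difference is (iv). The paper passes through Serre duality, rewriting $h^1(X,T_X(-d))$ as $h^{n-2}(X,\Omega_X(2d-n-1))$, and then computes the latter via the cotangent sequence and an auxiliary restriction sequence for $\Omega_{\pp^n}(2d-n-1)$. Your route---twist the normal bundle sequence by $\cO_X(-d)$ and read off $h^1(X,T_X(-d))=h^0(X,\cO_X)=1$ directly from the vanishing in (iii)---is shorter and avoids the extra sequences entirely; it also yields $h^0(X,T_X(-d))=0$ for free. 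A minor quibble: your closing remark locates the failure for $n=3$ solely in the ``diagonal'' case $k=0$, $i=j=n-1$ of Bott's formula, but that only accounts for $d=2$; for $n=3$ and $d\geq 3$ it is the top-degree case $i=n$ (namely $H^3(\pp^3,\Omega_{\pp^3}^2(4-2d))$) that fails to vanish. This does not affect the proof for $n\geq 4$.
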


 \begin{proof} \
 From the vanishing $h^1(X,  \cO_X)  =0$ and the cotangent sequence
\[
0 \to \cO_X \to \Om_{\pp^n} |_X (d) \to \Om_X(d) \to 0,
\]
we have
\[
h^0(X, \Om_X(d)) = h^0(X,  \Om_{\pp^n} |_X (d)) -1.
\]
Combining Proposition \ref{p.Bott} and  the following sequence
\begin{equation}
0 \to \Om_{\pp^n} \to \Om_{\pp^n}(d) \to \Om_{\pp^n}|_X(d) \to 0,
\end{equation}
we obtain
\[
h^0(X, \Om_X(d)) =  h^0(X, \Om_{\pp^n} |_X (d)) -1  = h^0(\pp^n,  \Om_{\pp^n}(d)) = {n+d-1 \choose d} (d-1),
\] which proves (i).

 (ii)  is immediate from Proposition  \ref{special}.

  Since
$T_{\pp^n} \cong \Om_{\pp^n}^{n-1} (n+1)$,    for $i=0,1,2$,  we have
\[
h^i(\pp^n,  T_{\pp^n}(-d)) = h^i (\pp^n ,   \Om_{\pp^n}^{n-1} (n+1-d) ) =0
\]
 from Proposition \ref{p.Bott}.
  Similarly,   we have $h^i(\pp^n,  T_{\pp^n}(-2d)) = 0$ for $i=1,2$.  Furthermore, Proposition \ref{p.Bott} gives
  \[
  h^3(\pp^n,  T_{\pp^n}(-2d)) =  h^3 (\pp^n ,   \Om_{\pp^n}^{n-1} (n+1-2d)  )=0.
  \]
 Thus (iii) follows from the sequence
\[
0 \to T_{\pp^n} (-2d) \to T_{\pp^n}(-d) \to  T_{\pp^n}(-d )|_X \to 0.
\]

Finally, to prove (iv), note that
\[
h^{n-2} (X,  \Om_{\pp^n}(2d-n-1)|_X) =0 = h^{n-1} (X, \Om_{\pp^n}(2d-n-1)|_X), 
\]
 which follows from Proposition \ref{p.Bott} combined with the sequence
$$ 0 \to \Om_{\pp^n}(d-n-1) \to \Om_{\pp^n}(2d-n-1) \to \Om_{\pp^n}(2d-n-1)|_X \to 0.$$
Then Serre duality and $K_X \cong \cO_X(d-n-1)$ give
   $$h^{n-1} (X, \cO_X (d-n-1)) = h^0(X, \cO_X (n+1-d) \otimes K_X) = h^0(X, \cO_X) =1$$ and
$$h^1(X,  T_X(-d)) =h^{n-2} (X,  \Om_X(2d-n-1)) = h^{n-1} (X, \cO_X (d-n-1)) =1.$$
\end{proof}

\begin{remark} Note that $h^{n-2} (X, \Omega_X (2d-n-1)) =1$ is in contrast to   $h^{n-2}(X, \Om_X(k))=0$ for $k > 2d-n-1$ from \cite[Lemma 3.8]{L}. \end{remark}

\begin{lemma}   \label{simple}
Let $X \subset \pp^n$ be as in Lemma \ref{basic}.  Then
\begin{itemize}
\item[(i)]  $h^0(X,  \End (T_{\pp^n})|_X)  = h^0(X, T_{\pp^n}|_X \otimes \Om_X) =1 $.
\item[(ii)] $h^1(X,  \End (T_{\pp^n})|_X)  = h^1(X,     T_{\pp^n}|_X \otimes \Om_X) =0$.
\end{itemize}
\end{lemma}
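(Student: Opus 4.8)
\emph{Proof proposal.} The plan is to reduce both equalities to cohomology of sheaves on $\pp^n$, using three standard exact sequences together with the vanishing results already established. The starting observation is that $\End(T_{\pp^n})|_X \cong T_{\pp^n}|_X \otimes \Om_{\pp^n}|_X$, so the two sheaves appearing in the statement are related by tensoring the cotangent sequence $0 \to \cO_X(-d) \to \Om_{\pp^n}|_X \to \Om_X \to 0$ with the bundle $T_{\pp^n}|_X$, which produces
\[
0 \to T_{\pp^n}(-d)|_X \to \End(T_{\pp^n})|_X \to T_{\pp^n}|_X \otimes \Om_X \to 0.
\]
By Lemma \ref{basic}(iii) the outer term $T_{\pp^n}(-d)|_X$ has vanishing cohomology in degrees $0,1,2$, so the associated long exact sequence gives isomorphisms $H^i(X, \End(T_{\pp^n})|_X) \cong H^i(X, T_{\pp^n}|_X \otimes \Om_X)$ for $i=0,1$. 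Hence it suffices to compute $h^0$ and $h^1$ of $\End(T_{\pp^n})|_X$ alone.

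For that I would restrict from $\pp^n$: the sequence $0 \to \End(T_{\pp^n})(-d) \to \End(T_{\pp^n}) \to \End(T_{\pp^n})|_X \to 0$, combined with Lemma \ref{endT} (which gives $h^i(\pp^n, \End(T_{\pp^n})(-d))=0$ for $0 \le i \le n-2$, a range that covers $i=0,1,2$ since $n \geq 4$), yields $H^i(X, \End(T_{\pp^n})|_X) \cong H^i(\pp^n, \End(T_{\pp^n}))$ for $i=0,1$. Finally, to evaluate the cohomology of $\End(T_{\pp^n}) = T_{\pp^n} \otimes \Om_{\pp^n}$ I would use the Euler sequence tensored by $\Om_{\pp^n}$, namely $0 \to \Om_{\pp^n} \to \Om_{\pp^n}(1)^{\oplus(n+1)} \to \End(T_{\pp^n}) \to 0$, and read off from Bott's formula (Proposition \ref{p.Bott}) that $\Om_{\pp^n}(1)$ has no cohomology in any degree while $\Om_{\pp^n}$ has one-dimensional $H^1$ and nothing else. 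The long exact sequence then forces $h^0(\pp^n, \End(T_{\pp^n}))=1$ and $h^1(\pp^n, \End(T_{\pp^n}))=0$, which are the simplicity of $T_{\pp^n}$ and the rigidity of $\pp^n$.

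Concatenating these chains of isomorphisms gives the claimed values $1$ and $0$ for both sheaves. I expect no serious obstacle: the whole argument is bookkeeping with three short exact sequences, and the only point requiring care is checking that the twisted cohomology groups on $\pp^n$ really do vanish in the precise range of degrees used — which is exactly what Lemma \ref{endT} and Lemma \ref{basic}(iii) supply, and which is where the hypothesis $n \geq 4$ enters, through the vanishing of $H^2(\pp^n, \End(T_{\pp^n})(-d))$ needed to carry the $H^1$ computation down to $X$.
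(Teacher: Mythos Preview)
Your proof is correct and follows essentially the same route as the paper: the same cotangent sequence tensored with $T_{\pp^n}|_X$, the same restriction sequence for $\End(T_{\pp^n})$, and the same appeals to Lemma~\ref{basic}(iii) and Lemma~\ref{endT}. The only difference is in the last step, where the paper quotes the simplicity of $T_{\pp^n}$ and the rigidity of the flag variety $\pp(T_{\pp^n})$, whereas you compute $h^0$ and $h^1$ of $\End(T_{\pp^n})$ directly from the Euler sequence and Bott's formula---a slightly more self-contained variant of the same argument.
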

\begin{proof}
Taking the tensor product of the cotangent sequence  and  $T_{\pp^n}|_X$, we obtain
\[
0 \to T_{\pp^n}(-d)|_X \to \End (T_{\pp^n})|_X \to T_{\pp^n}|_X \otimes \Om_X \to 0.
\]
By Lemma \ref{basic} (iii),  we obtain $ H^i(X,  \End (T_{\pp^n})|_X) = H^i (X, T_{\pp^n}|_X \otimes \Om_X ) $ for $i=0,1$.

From the sequence
\[
0 \to  \End (T_{\pp^n})(-d)  \to  \End (T_{\pp^n}) \to  \End (T_{\pp^n})|_X \to 0,
\] and  $H^i(\pp^n,  \End (T_{\pp^n}) (-d)) =0$ for $i=0, 1, 2$ in Lemma \ref{endT},
we have
$h^i(X, \End (T_{\pp^n})|_X) = h^i(\pp^n, \End (T_{\pp^n}))$
for $i=0,1.$    Thus Lemma \ref{simple} follows from the fact that $T_{\pp^n}$ is simple (\cite[Ch. I, Lemma 4.1.2]{OSS}) and it has no infinitesimal deformation (because the flag manifold $\pp (T_{\pp^n})$ is infinitesimally rigid).
\end{proof}

Now we are ready to prove Theorem \ref{t.dim}.

\begin{proof}[Proof of Theorem \ref{t.dim}]  Consider the long exact sequence
\begin{eqnarray*} \lefteqn{
0 \to H^0(X, T_X \otimes \Om_X) \to H^0(X, T_{\pp^n}|_X \otimes \Om_X) \to H^0(X, \Om_X(d)) } \\ & &  \to H^1(X, T_X \otimes \Om_X) \to H^1(X, T_{\pp^n}|_X \otimes \Om_X) \to \cdots.
\end{eqnarray*}
Recall that $T_X$ is stable from \cite[Corollary 0.3]{PW}, hence $h^0(X, T_X \otimes \Om_X) = 1.$ Thus
 Lemma \ref{simple} implies that $H^0(X,  \Om_X(d))  \cong H^1(X, T_X \otimes \Om_X) $.
Then Lemma \ref{basic} (i) gives the value of $h^1(X, T_X \otimes \Om_X) = h^1(X, \End_0(T_x)).$
\end{proof}

\section{Dimension of $H^2(X, T_X \otimes \Omega_X)$}\label{s.H2}
To prove Theorem \ref{t.H2}, recall the following.

\begin{lemma}\label{l.Chern}
Let $X$ be a smooth degree $d$ hypersurface in $\pp^n$. Let $h=c_1(\cO_X(1))$ be the hyperplane class. Then the total Chern class of $T_X$ is $(1 + h)^{n+1} \cdot (1 + dh)^{-1}.$ \end{lemma}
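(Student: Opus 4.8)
The plan is to apply the Whitney sum formula to the normal bundle sequence of $X$ in $\pp^n$. First I would recall that for a nonsingular hypersurface of degree $d$ the normal bundle is $N_{X/\pp^n}\cong \cO_X(d)$, so that there is a short exact sequence of bundles on $X$
\[
0 \to T_X \to T_{\pp^n}|_X \to \cO_X(d) \to 0,
\]
whence, by the multiplicativity of the total Chern class in short exact sequences,
\[
c(T_{\pp^n}|_X) = c(T_X)\cdot c(\cO_X(d)) = c(T_X)\cdot (1+dh).
\]

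Next I would compute $c(T_{\pp^n})$ from the Euler sequence
\[
0 \to \cO_{\pp^n} \to \cO_{\pp^n}(1)^{\oplus (n+1)} \to T_{\pp^n} \to 0,
\]
which yields $c(T_{\pp^n}) = (1+H)^{n+1}$ with $H=c_1(\cO_{\pp^n}(1))$. Since Chern classes commute with pullback along the inclusion $X \hookrightarrow \pp^n$, restricting and writing $h = H|_X$ gives $c(T_{\pp^n}|_X) = (1+h)^{n+1}$. Combining this with the previous identity produces $c(T_X) = (1+h)^{n+1}\cdot (1+dh)^{-1}$, where the inverse is formed in the ring $H^*(X,\mathbb{Z})$; this is legitimate because $h$ is nilpotent ($h^n = 0$, as $\dim X = n-1$), so $(1+dh)^{-1} = \sum_{k\ge 0}(-d)^k h^k$ is a finite sum.

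There is no serious obstacle here: the argument is entirely formal once one has the two short exact sequences. The only points that genuinely require justification are classical and may be cited rather than reproved — namely, that the conormal sheaf of a smooth degree $d$ hypersurface is $\cO_X(-d)$ (so $N_{X/\pp^n}\cong\cO_X(d)$ and the first sequence is exact), and that the total Chern class is natural with respect to restriction. If one prefers an expression avoiding a formal inverse, one can instead record the individual Chern classes $c_i(T_X)$ by expanding $(1+h)^{n+1}(1 - dh + d^2h^2 - \cdots)$ up to degree $n-1$, but for the applications in the sequel the compact form $(1+h)^{n+1}\cdot(1+dh)^{-1}$ is the convenient one.
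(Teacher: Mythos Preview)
Your proof is correct and follows essentially the same approach as the paper: both use the normal bundle sequence $0 \to T_X \to T_{\pp^n}|_X \to \cO_X(d) \to 0$ together with the Euler sequence to obtain $c(T_X) = c(T_{\pp^n}|_X)\cdot c(\cO_X(d))^{-1} = (1+h)^{n+1}(1+dh)^{-1}$. Your version is slightly more explicit about why the formal inverse makes sense and about naturality under restriction, but the argument is the same.
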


\begin{proof} From Euler sequence and tangent bundle sequence of $X$ in $\pp^n$, total Chern class of $T_X$ is
\begin{eqnarray*}
c(T_X) &=&\sum_{i} c_i(T_X) \ = \ c(T_{\pp^n}|_X)\cdot c(\cO_X(d))^{-1}\\
&=& c(\cO_X(1))^{n+1}\cdot c(\cO_X(d))^{-1}\ = \ \frac{(1+h)^{n+1}}{(1+dh)}.
\end{eqnarray*} \end{proof}

\begin{proof}[Proof of Theorem \ref{t.H2}]
By twisting the cotangent bundle $\Omega_X$ of $X$ into the tangent bundle sequence of $X$ in $\pp^n$ and the Euler sequence of $\pp^n$, we have
\begin{equation}\label{eq:twte}
\xymatrix{ && 0 \\ 0\ar[r]&T_X\otimes \Omega_X\ar[r]&T_{\pp^n}|_X\otimes \Omega_X\ar[r] \ar[u] & \Omega_X(d)\ar[r]&0\\
&&\Omega_X(1)^{n+1}\ar[u]&&\\
&&\Omega_X\ar[u]&& \\
&& 0 \ar[u]}
\end{equation}
We have the long exact sequences resulting from \eqref{eq:twte}:
\begin{equation}\label{longob}
\xymatrix{&&&H^{3}(\Omega_X)&\\
\cdots \ar[r] & H^{1}(\Omega_X(d))\ar[r] & H^{2}(T_X\otimes \Omega_X)\ar[r]&H^{2}(T_{\pp^n}|_X\otimes\Omega_X)\ar[u] \ar[r] & \cdots \\
&&&H^{2}(\Omega_X(1)^{n+1})\ar[u]}
\end{equation}
When $n\geq 6$,  Proposition \ref{special} (ii) gives $H^3(\Omega_X)=0$ and Lemma \ref{basic} (ii) gives $h^1(\Omega_X(d))=h^2(\Omega_X(1))=0$.  This shows $H^2(X, T_X \otimes \Omega_X) =0$.

When $n=4$, from Lemma \ref{l.Chern},we have
\[
c(T_X)=1+(-d+5)h+(d^2-5d+10)h^2+(-d^3+5d^2-10d+10)h^3.
\]
Hence
\[
\begin{split}
ch(T_X)&=3+(-d+5)h+(-\frac{1}{2}d^2+\frac{5}{2})h^2+(-\frac{1}{6}d^3+\frac{5}{6})h^3,\\
ch(\Omega_X)&=3+(d-5)h+(-\frac{1}{2}d^2+\frac{5}{2})h^2+(\frac{1}{6}d^3-\frac{5}{6})h^3 \text{ and}\\
td(X)&=1+(-\frac{1}{2}d+\frac{5}{2})h+(\frac{1}{6}d^2-\frac{5}{4}d+\frac{35}{12})h^2-\frac{1}{24}(d-5)(d^2-5d+10)h^3.
\end{split}
\]
Therefore by Hirzebruch-Riemann-Roch formula,
\begin{equation}\label{eq:HRR}
\chi (X, T_X\otimes \Omega_X)= \int_{[X]} ch(T_X)\cdot ch(\Omega_X) \cdot td(X) = \frac{d(d-5)(13d^2-25d+10)}{8}.
\end{equation}

   \begin{itemize}
   \item If $d=2$,  then $\chi (T_X\otimes \Omega_X) =-9$ by \eqref{eq:HRR}. But $h^0(T_X\otimes \Omega_X)=1$ and  $h^3 (T_X,T_X) = h^0(T_X\otimes \Omega_X(-3)) =0$ by Serre duality and the stability of $T_X$. On the other hand, by Theorem \ref{t.dim}, we have $h^1(T_X\otimes \Omega_X)= {4+2-1 \choose 2}=10$. This shows that $h^2(T_X\otimes \Omega_X)= 0$.
   \item If $d=3$,  essentially the same argument as in $d=2$  shows $h^2(T_X\otimes \Omega_X)= 0.$ 
   \item If $d=4$,  we have $\chi (T_X\otimes \Omega_X) = -59$ by \eqref{eq:HRR}.
Since $K_X \cong \cO_X(-1)$ and $T_X$ is stable,  we have $h^3 (T_X\otimes \Omega_X) = h^0 (T_X\otimes \Omega_X \otimes K_X)  =0$.   On the other hand, we know that $h^1 (T_X\otimes \Omega_X) = 105$ by Theorem \ref{t.dim}.  Therefore $h^2(T_X\otimes \Omega_X) = 45$.
\item Let $d=5$. Since $K_X \cong \cO_X$, we have $$h^2 (T_X\otimes \Omega_X) = h^1 (T_X\otimes \Omega_X)=(5-1)\times {4+5-1 \choose 5}=224$$ by Serre duality and Theorem \ref{t.dim}.
   \end{itemize} \end{proof}

We would like to finish the section with the following result mentioned in Section \ref{s.intro}.

\begin{proposition}\label{p.n=3}
When $X \subset \pp^3$ is a surface of degree $d \geq 3$, $$ h^1(X, T_X\otimes \Omega_X)-h^2(X, T_X\otimes \Omega_X) =\frac{1}{3}(d-1)(7d^2-5d-3).$$ \end{proposition}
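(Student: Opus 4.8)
The statement is a Hirzebruch--Riemann--Roch computation, once the group $H^0$ is pinned down. On the surface $X$ one has
\[
h^1(X, T_X\otimes\Omega_X) - h^2(X, T_X\otimes\Omega_X) = h^0(X, T_X\otimes\Omega_X) - \chi(X, T_X\otimes\Omega_X),
\]
so the plan is to compute $\chi(X, T_X\otimes\Omega_X)$ and separately identify $h^0(X, T_X\otimes\Omega_X) = h^0(X, \End(T_X))$.

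First I would show $h^0(X, \End(T_X)) = 1$ for $d \ge 3$, i.e.\ that $T_X$ is simple; this is exactly the point where the hypothesis $d\ge 3$ enters, since for $d=2$ one has $X\cong\pp^1\times\pp^1$ and $T_X\cong\cO(2,0)\oplus\cO(0,2)$, for which $h^0(\End(T_X))=2$. One way is to invoke the stability of $T_X$ with respect to $\cO_X(1)$ (see \cite[Corollary 0.3]{PW}), which is available for surfaces of degree $\ge 3$ in $\pp^3$ as well. Alternatively, the $h^0$-part of the arguments in Lemmas \ref{endT} and \ref{simple} goes through for $n=3$: Bott's formula still gives $h^0(\pp^3, \End(T_{\pp^3})(-d))=h^1(\pp^3,\End(T_{\pp^3})(-d))=0$ and $h^0(X, T_{\pp^3}(-d)|_X)=h^1(X, T_{\pp^3}(-d)|_X)=0$ for $d\ge 2$, hence $h^0(X, \End(T_{\pp^3})|_X)=h^0(X, T_{\pp^3}|_X\otimes\Omega_X)=1$, and then the cotangent-twisted tangent sequence $0\to T_X\otimes\Omega_X\to T_{\pp^3}|_X\otimes\Omega_X\to\Omega_X(d)\to 0$ forces $h^0(X, T_X\otimes\Omega_X)\le 1$, while $\mathrm{id}$ gives the reverse inequality. (The higher-cohomology vanishings of those lemmas genuinely fail for $n=3$, but only $h^0$ is needed here.)

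Second, I would compute $\chi(X, T_X\otimes\Omega_X)$ by Hirzebruch--Riemann--Roch. By Lemma \ref{l.Chern} with $n=3$, writing $h=c_1(\cO_X(1))$ so $\int_X h^2 = d$, we get $c_1(T_X)=(4-d)h$ and $c_2(T_X)=(d^2-4d+6)h^2$, hence
\[
\mathrm{ch}(T_X)=2+(4-d)h+\tfrac12(4-d^2)h^2,\qquad \mathrm{ch}(\Omega_X)=2+(d-4)h+\tfrac12(4-d^2)h^2,
\]
\[
\mathrm{td}(X)=1+\tfrac12(4-d)h+\tfrac16(d^2-6d+11)h^2 .
\]
Multiplying these out and integrating the degree-two component gives
\[
\chi(X, T_X\otimes\Omega_X)=\int_X \mathrm{ch}(T_X)\,\mathrm{ch}(\Omega_X)\,\mathrm{td}(X)=\frac{d(-7d^2+12d-2)}{3}.
\]
Finally, combining this with $h^0(X,T_X\otimes\Omega_X)=1$,
\[
h^1(X, T_X\otimes\Omega_X)-h^2(X, T_X\otimes\Omega_X)=1-\frac{d(-7d^2+12d-2)}{3}=\frac{7d^3-12d^2+2d+3}{3}=\frac13(d-1)(7d^2-5d-3),
\]
which is the assertion. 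The only step that is not routine bookkeeping is the identity $h^0(X, \End(T_X))=1$; the mild "obstacle" there is that, unlike in the $n\ge 4$ case treated earlier, the higher-cohomology vanishings of Lemmas \ref{basic} and \ref{simple} are no longer all available for $n=3$, so the simplicity of $T_X$ has to be secured separately (either by stability or by the restricted computation above).
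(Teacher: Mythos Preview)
Your argument is correct and follows exactly the paper's route: compute $\chi(X,T_X\otimes\Omega_X)$ via Hirzebruch--Riemann--Roch from Lemma~\ref{l.Chern} and combine with $h^0(X,T_X\otimes\Omega_X)=1$ to get $1-\chi=\tfrac13(d-1)(7d^2-5d-3)$; you in fact supply more justification for the simplicity of $T_X$ than the paper, which just asserts $H^0(X,T_X\otimes\Omega_X)=\cc$ for $d\ge3$. One small caveat: \cite[Corollary 0.3]{PW} concerns Fano manifolds with $b_2=1$ and so does not cover surfaces in $\pp^3$ of degree $d\ge3$ (a cubic surface has $b_2=7$, and for $d\ge4$ the surface is not Fano), but your alternative argument via the $i=0,1$ vanishings of Lemmas~\ref{endT} and~\ref{simple} is valid for $n=3$ and $d\ge3$ and already suffices.
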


\begin{proof} 
From Lemma \ref{l.Chern}, the total Chern class of $T_X$ is
\[c(T_X)=1+(-d+4)h+(d^2-4d+6)h^2.
\]
Hence
\[
\begin{split}ch(T_X)&=2+(-d+4)h+(-\frac{1}{2}d^2+2)h^2,\\
ch(\Omega_X)&=2+(d-4)h+(-\frac{1}{2}d^2+2)h^2 \text{ and }\\
td(X)&=1+(-\frac{1}{2}d+2)h+(\frac{1}{6}d^2-d+\frac{11}{6})h^2.
\end{split}
\]
Therefore by Hirzebruch--Riemann--Roch formula, we have
\begin{eqnarray*}
\chi (X, T_X\otimes \Omega_X) & = & \int_{[X]} ch(T_X)\cdot ch(\Omega_X) \cdot td(X)\\
&= &-\frac{7d^3-12d^2+2d}{3}. \end{eqnarray*}
Since $H^0(X, T_X \otimes \Omega_X) =\cc$ when $d \geq 3$, we obtain $$h^1(X, T_X\otimes \Omega_X)-h^2(X, T_X\otimes \Omega_X) =1-\chi(T_X\otimes \Omega_X)=\frac{1}{3}(d-1)(7d^2-5d-3).$$ \end{proof}

\section{A birational model of $\cM$}\label{s.Main}

\begin{definition}\label{d.A} Let $V$ be a complex vector space of dimension $n+1 \geq 5$. Fix an integer $d \geq 2$.
Regard  $\Sym^{d-1} V^* \otimes V^*$ as a subspace of $\bigotimes^d V^*$ consisting of elements $\alpha \in \bigotimes^d V^*$ satisfying 
\[
\alpha (v_1, \ldots, v_{d-1}, v_d) = \alpha (v_{\sigma(1)}, \ldots, v_{\sigma(d-1)}, v_d) 
\]
 for any permutation $\sigma $ of  $\{1, \ldots, d-1\}$.
 Let $\bA \subset \Sym^{d-1}V^* \otimes V^*$ be the kernel of the symmetrizing map ${\rm sym}^d: \bigotimes^d V^* \to \Sym^d V^*$. Equivalently, it is the
 subspace consisting of  $\alpha \in \Sym^{d-1}V^* \otimes V^*$ satisfying $$\alpha  ( v, v, \ldots, v, u) = - (d-1)  \: \alpha  (u, v,\ldots, v,v) \mbox{ for any } u,v \in V.$$\end{definition}

\begin{lemma} \label{l.dim}
$\dim \bA  = {n+d-1 \choose d} \cdot (d-1)$ and $\bA \cap \Sym^d V^* = \{0\}.$
\end{lemma}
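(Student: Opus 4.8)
The plan is to identify $\bA$ with a canonical direct complement of $\Sym^d V^*$ inside $\Sym^{d-1}V^* \otimes V^*$; once this decomposition is in hand, both assertions of the lemma drop out, and only a short binomial identity remains. I would fix $\mathrm{sym}^d \colon \bigotimes^d V^* \to \Sym^d V^*$ to be the normalized symmetrization projector (any other normalization differs from it by a nonzero scalar and changes nothing below), so that $\mathrm{sym}^d$ restricts to the identity on the subspace $\Sym^d V^* = (\bigotimes^d V^*)^{S_d}$ of fully symmetric tensors.

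The first step is to observe that the restriction $\mathrm{sym}^d \colon \Sym^{d-1}V^* \otimes V^* \to \Sym^d V^*$ is still surjective. This is because full symmetrization absorbs the partial symmetrization over the first $d-1$ arguments: every product $\ell_1 \cdots \ell_d$ spanning $\Sym^d V^*$ is $\mathrm{sym}^d\big((\ell_1\cdots \ell_{d-1}) \otimes \ell_d\big)$. Equivalently, if $e$ is the idempotent on $\bigotimes^d V^*$ that averages over the copy of $S_{d-1} \subset S_d$ fixing the last tensor factor, then $\Image e = \Sym^{d-1}V^* \otimes V^*$ and $\mathrm{sym}^d \circ e = \mathrm{sym}^d$, so $\mathrm{sym}^d(\Sym^{d-1}V^*\otimes V^*) = \mathrm{sym}^d(\bigotimes^d V^*) = \Sym^d V^*$. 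Since $\mathrm{sym}^d$ also restricts to the identity on $\Sym^d V^* \subset \Sym^{d-1}V^*\otimes V^*$, the kernel $\bA$ of the restriction and the subspace $\Sym^d V^*$ give a direct sum decomposition
\[
\Sym^{d-1}V^* \otimes V^* \;=\; \bA \ \oplus\ \Sym^d V^*.
\]
From this, $\bA \cap \Sym^d V^* = \{0\}$ is immediate, and $\dim \bA = (n+1)\binom{n+d-1}{d-1} - \binom{n+d}{d}$.

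It then remains to check the elementary identity
\[
(n+1)\binom{n+d-1}{d-1} - \binom{n+d}{d} \;=\; (d-1)\binom{n+d-1}{d}.
\]
Using Pascal's rule $\binom{n+d}{d} = \binom{n+d-1}{d} + \binom{n+d-1}{d-1}$, the left side becomes $n\binom{n+d-1}{d-1} - \binom{n+d-1}{d}$, so the identity reduces to $n\binom{n+d-1}{d-1} = d\binom{n+d-1}{d}$, and both sides equal $(n+d-1)\binom{n+d-2}{d-1}$ after clearing factorials. I do not expect a genuine obstacle: the whole substance lies in the surjectivity in the first step — equivalently, the classical fact that $\bA$ is the Schur module $\mathbb{S}_{(d-1,1)}V^*$ complementary to $\Sym^d V^*$ in $\Sym^{d-1}V^* \otimes V^*$ — after which everything is bookkeeping. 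As an alternative to the binomial manipulation, one could instead quote the hook--content formula $\dim \mathbb{S}_{(d-1,1)}V^* = \tfrac{(n+d-1)!}{d\,(d-2)!\,(n-1)!}$ and note that it agrees with $(d-1)\binom{n+d-1}{d}$.
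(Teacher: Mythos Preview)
Your proof is correct and follows essentially the same route as the paper: surjectivity of the restricted symmetrization map gives the dimension count, and the fact that $\mathrm{sym}^d$ is the identity on $\Sym^d V^*$ gives $\bA \cap \Sym^d V^* = \{0\}$. You add a bit more detail (the idempotent argument, the explicit binomial manipulation, the Schur module remark), but the substance is identical.
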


\begin{proof}
 Since the restriction  ${\rm sym}^d|_{\Sym^{d-1} V^* \otimes V^*}$ is surjective, \begin{eqnarray*} \dim \bA & = &
 \dim (\Sym^{d-1}V^* \otimes V^* ) - \dim (\Sym^d V^*) \\ &=&  {n+d-1 \choose d-1} \cdot (n+1) -  {n+d \choose d} = {n+d-1 \choose d} \cdot (d-1). \end{eqnarray*} Since ${\rm sym}^d$ sends $\Sym^d V^*  (\subset \bigotimes^d V^*) $ isomorphically to $\Sym^d V^*$, we have $\bA \cap \Sym^d V^* = \{0\}.$ \end{proof}

\begin{definition}\label{d.phi}
In Definition \ref{d.A}, let $X \subset \pp V$ be a nonsingular hypersurface cut out by a symmetric $d$-form $q \in \Sym^d V^*.$ For each $x \in X$, let $\h{x} \subset V$ be the 1-dimensional subspace determined by $x \in \pp V$. Its dual $\h{x}^*$ can be identified with the fiber of the line bundle $\cO_X(1)$ at $x$. \begin{itemize}
\item[(i)] For each $x \in X$ and each $\alpha \in \bA$, define the homomorphism $\phi^{\alpha}_x : V \otimes \h{x}^* \to \Sym^d \h{x}^*$ by $$\phi^{\alpha}_x( v \otimes \lambda)(u, \cdots, u) := \lambda(u) \cdot (q + \alpha) (u, \cdots, u, v) $$ for any $v \in V, \lambda \in \h{x}^*$ and $u \in \h{x}$.
 \item[(ii)] For each $\alpha \in \bA$, the family of homomorphisms $\{\phi^{\alpha}_x \mid x \in X\}$ from (i) determines a  homomorphism $\phi^{\alpha}: V \otimes \cO_X(1) \to \cO_X(d)$ of vector bundles on $X$.
     \item[(iii)]   In terms of the projection $\pi: X \times \bA \to X$,  (ii) determines a homomorphism $\Phi: V \otimes \pi^* \cO_X(1) \to \pi^* \cO_X(d)$ of vector bundles on $X \times \bA$.
         \item[(iv)] Since the one-dimensional subspace $\h{x} \otimes \h{x}^* \subset V \otimes \cO_x(1)$ is contained in ${\rm Ker}(\phi^{\alpha}_x)$ for each $x \in X$, we have a natural inclusion $\pi^*\cO_X \subset {\rm Ker}(\Phi).$ Define $\mathbb{E}$ as the quotient sheaf  ${\rm Ker}( \Phi) / \pi^* \cO_X$ on $X \times \bA$, to have
     the following diagram of sheaves over $X \times \bA$:
 \begin{equation*} 
\begin{tikzcd}
&  & \pi^* \cO_X(d) \arrow[r, phantom, sloped, "="]  & \pi^* \cO_X(d)  &\\
0 \arrow[r] &
 \pi^*\cO_{X} \arrow[r]  &
V \otimes \pi^*\cO_{X}(1) \arrow[r] \arrow[u,  "\Phi"] &
\pi^* (T_{\pp V}|_X)  \arrow[r]  \arrow[u]   & 0
\\
0 \arrow[r] &
\pi^* \cO_X \arrow[r] \arrow[u, phantom, sloped, "="]&
{\rm Ker}( \Phi )  \arrow[r] \arrow[u, phantom, sloped, "\subset"] &
\mathbb{E}   \arrow[r] \arrow[u, phantom, sloped, "\subset"] &
0.
\end{tikzcd}
\end{equation*}
 \end{itemize} \end{definition}

 \begin{lemma}\label{l.circ}
 In Definition \ref{d.phi}, let $\bA^\circ \subset \bA$ be the subset consisting of $\alpha \in \bA$ such that  ${\rm Ker}(\phi^{\alpha})$ is a vector bundle of rank $n-1$ on $X$.  For $\alpha \in \bA^\circ$, denote by $E^{\alpha}$ the vector bundle on $X$ corresponding to $\mathbb{E}|_{X \times \{ \alpha\}}$.   Then   $\bA^\circ$ is a Zariski open subset containing $0 \in \bA$ and  $E^0$ is isomorphic to $ T_X$.
 \end{lemma}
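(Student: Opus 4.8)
The plan is to verify three things in order: that the condition defining $\bA^\circ$ is equivalent to surjectivity of $\phi^\alpha$, that this is a Zariski open condition which holds at the origin, and that the resulting bundle at the origin is $T_X$; the last point is where the geometry of $X\subset\pp V$ enters, the rest being formal. First I would observe that $\Ker(\phi^\alpha)$ is locally free of the required rank exactly when the (generically nonzero, cf.\ Lemma \ref{l.dim}) map $\phi^\alpha\colon V\otimes\cO_X(1)\to\cO_X(d)$ is everywhere surjective, i.e.\ when $\phi^\alpha_x\ne 0$ for every $x\in X$; in that case $\Ker(\phi^\alpha)$ is a subbundle of $V\otimes\cO_X(1)$ containing the line subbundle $\cO_X$ from the Euler sequence, so $E^\alpha=\Ker(\phi^\alpha)/\cO_X$ is a vector bundle. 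To see that this locus is open I would work on $X\times\bA$ with the family map $\Phi$, set $\mathcal Q=\mathrm{coker}(\Phi)$ and $Z=\mathrm{Supp}(\mathcal Q)$, and note by Nakayama's lemma that $(x,\alpha)\in Z$ precisely when the fibre map $\phi^\alpha_x$ fails to be surjective; hence $\bA^\circ=\bA\setminus\mathrm{pr}_{\bA}(Z)$, and since $X$ is projective the projection $\mathrm{pr}_{\bA}\colon X\times\bA\to\bA$ is closed, so $\mathrm{pr}_{\bA}(Z)$ is closed and $\bA^\circ$ is Zariski open. Over $X\times\bA^\circ$ the map $\Phi$ is fibrewise, hence sheaf-theoretically, surjective onto the line bundle $\pi^*\cO_X(d)$, so $\Ker(\Phi)$ and $\mathbb{E}=\Ker(\Phi)/\pi^*\cO_X$ are locally free there and their formation commutes with base change to each $X\times\{\alpha\}$; in particular $\mathbb{E}|_{X\times\{\alpha\}}\cong E^\alpha$, which legitimizes the notation $E^\alpha$.

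It then remains to treat $\alpha=0$. By Definition \ref{d.phi}(iv) the homomorphism $\phi^0$ kills the Euler subsheaf $\cO_X\subset V\otimes\cO_X(1)$, so it factors as $\phi^0=\psi^0\circ p$, where $p\colon V\otimes\cO_X(1)\twoheadrightarrow T_{\pp V}|_X$ is the Euler surjection and $\psi^0\colon T_{\pp V}|_X\to\cO_X(d)$ is a bundle map. Evaluating on fibres at $x=[u]$ and using the polarization identity $D_vq(u)=d\cdot q(u,\dots,u,v)$, together with the identifications $\h{x}^*\cong\cO_X(1)_x$ and $\Sym^d\h{x}^*\cong\cO_X(d)_x$ of Definition \ref{d.phi}, one checks that $\psi^0$ equals $d$ times the normal bundle projection $T_{\pp V}|_X\to N_{X/\pp V}\cong\cO_X(d)$ determined by $q$. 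Since $X$ is nonsingular this projection is a surjection of vector bundles with kernel $T_X$; hence $\psi^0$, and therefore $\phi^0$, is surjective, so $0\in\bA^\circ$, and $\Ker(\phi^0)=p^{-1}(T_X)$ sits in a short exact sequence $0\to\cO_X\to\Ker(\phi^0)\to T_X\to 0$. Passing to the quotient by $\cO_X$ gives $E^0=\Ker(\phi^0)/\cO_X\cong T_X$.

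The one step carrying real content is the fibrewise identification of $\psi^0$ with the normal bundle projection, where the bookkeeping of the scalar $d$ from the polarization identity and of the line bundle identifications must be done carefully. Once that is in place, the openness of $\bA^\circ$ (via properness of $X$) and the local freeness over $\bA^\circ$ (via flatness of line bundles) are routine.
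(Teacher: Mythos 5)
Your argument is correct and, at its core, rests on the same fact as the paper's proof: that $q$ cuts out $X$, so the degree-$d$ piece of $\phi^0$ recovers the (projectivized) tangent spaces of $X$. The paper phrases this fibrewise via the affine cone, identifying ${\rm Ker}\,\phi^0_x$ with $T_u\h{X}\otimes\h{x}^*$ where $T_u\h{X}=\{v: q(u,\ldots,u,v)=0\}$, and reads off $0\in\bA^\circ$ and $E^0\cong T_X$ directly; you instead factor $\phi^0$ through the Euler surjection and identify the induced map $\psi^0:T_{\pp V}|_X\to\cO_X(d)$ with a nonzero multiple of the normal-bundle projection, then quote the normal bundle sequence. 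These are two packagings of the same polarization identity, but your version buys two things the paper leaves implicit: the openness of $\bA^\circ$ (via properness of $X$ and the support of ${\rm coker}\,\Phi$) and the base-change statement justifying $\mathbb{E}|_{X\times\{\alpha\}}\cong E^{\alpha}$ over $\bA^\circ$, both of which are genuinely part of the lemma's assertion and are handled correctly. Two harmless slips: the scalar goes the other way, $\psi^0=\tfrac1d\,dq$ rather than $d$ times the normal projection (irrelevant for surjectivity and kernels); and note that ${\rm Ker}(\phi^\alpha)$ itself has rank $n$, with $E^\alpha={\rm Ker}(\phi^\alpha)/\cO_X$ of rank $n-1$ --- your reading of the surjectivity condition is the intended one (it is what Lemma \ref{l.faithful} later uses), so equating $\bA^\circ$ with the everywhere-surjective locus is the right interpretation, even though local freeness of a kernel is not literally equivalent to surjectivity in general.
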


 \begin{proof} Since $q$ defines the hypersurface $X$, for any $x \in X$ and  $0 \neq u \in \h{x},$
the affine tangent space $T_u \h{X} \subset V$ at $u$ to the affine cone $\h{X} \subset V$ of $X \subset \pp V$ is $$T_u \h{X} = \{ v \in V \mid q(u, \ldots, u, v) =0\}.$$ Thus ${\rm Ker} \phi^0_x = T_u \h{X} \otimes \h{x}^*$. It follows that $0 \in \bA^\circ$ and $E^0$ is isomorphic to $T_X$. \end{proof}

\begin{lemma}  \label{l.faithful} In the notation of Lemma \ref{l.circ},
 there is a Zariski open subset $\bA^{\rm faithful} \subset \bA^\circ$ with $0 \in \bA^{\rm faithful}$ such that   for any $\alpha,\alpha' \in \bA^{\rm faithful},$ the two vector bundles $E^{\alpha  }$ and $E^{\alpha'}$ on $X$ are  isomorphic only when $\alpha = \alpha'$.
\end{lemma}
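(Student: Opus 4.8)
The plan is to construct an explicit ``universal'' endomorphism-valued function on $\bA$ that recovers $\alpha$ from the isomorphism class of $E^\alpha$, at least on a Zariski open set. First I would fix a reference: since $E^0 \cong T_X$ is simple (indeed stable, hence $h^0(X,\End E^0)=1$), semicontinuity gives a Zariski open $\bA^{\rm simple}\subset\bA^\circ$ containing $0$ on which $E^\alpha$ is simple, so in particular $\Hom(E^\alpha, E^{\alpha'})$ is at most one-dimensional for $\alpha,\alpha'$ in this set. The key point is then to recover $\alpha$ functorially. From the lower row of the diagram in Definition \ref{d.phi}, $E^\alpha$ comes with the data of a surjection $E^\alpha \twoheadleftarrow \Ker(\phi^\alpha)$ and the inclusion $\Ker(\phi^\alpha)\subset V\otimes\cO_X(1)$, equivalently a subsheaf of $V\otimes\cO_X(1)$ together with its quotient by $\cO_X$; but more intrinsically, one should extract from $E^\alpha$ a canonical map back to $V\otimes\cO_X(1)$ and thence the form $q+\alpha$. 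I would argue that the natural inclusion $E^\alpha \hookrightarrow T_{\pp V}|_X$ (the right-hand vertical of the diagram) is, up to scalar, the \emph{unique} nonzero map $E^\alpha \to T_{\pp V}|_X$: this follows from $h^0(X,\End(T_{\pp V})|_X)=h^0(X,T_{\pp V}|_X\otimes\Omega_X)=1$ (Lemma \ref{simple}) applied after twisting the lower two rows, since any map $E^\alpha\to T_{\pp V}|_X$ extends/lifts to an endomorphism-type datum that must be scalar.

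Granting that uniqueness, here is the recovery procedure. An isomorphism $f: E^\alpha \xrightarrow{\sim} E^{\alpha'}$ composed with the canonical inclusion $E^{\alpha'}\hookrightarrow T_{\pp V}|_X$ gives a nonzero map $E^\alpha \to T_{\pp V}|_X$, which by uniqueness equals a scalar multiple of the canonical inclusion $E^\alpha\hookrightarrow T_{\pp V}|_X$. Hence $f$ carries the subsheaf $\Ker(\phi^\alpha)\subset V\otimes\cO_X(1)$ (pulled back via $V\otimes\cO_X(1)\twoheadrightarrow T_{\pp V}|_X$, modulo $\cO_X$) to the corresponding subsheaf for $\alpha'$ — that is, $\Ker(\phi^\alpha)$ and $\Ker(\phi^{\alpha'})$ agree as subsheaves of $V\otimes\cO_X(1)$ (after adjusting by the automorphism $\cO_X\subset\cdot$, which is harmless since $\Aut(\cO_X)=\cc^*$ and it sits inside both kernels canonically). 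Finally I would show $\Ker(\phi^\alpha)=\Ker(\phi^{\alpha'})$ as subsheaves forces $q+\alpha$ and $q+\alpha'$ to be proportional: indeed at a general $x\in X$ the fiberwise kernel $\Ker(\phi^\alpha_x)\subset V\otimes\h x^*$ is the hyperplane $\{v : (q+\alpha)(u,\dots,u,v)=0\}\otimes\h x^*$ (for $0\ne u\in\h x$), so equality of the kernel subsheaves means the linear forms $v\mapsto(q+\alpha)(u,\dots,u,v)$ and $v\mapsto(q+\alpha')(u,\dots,u,v)$ are proportional for all $x\in X$, i.e. $(q+\alpha)(u,\dots,u,\cdot) = c(u)\,(q+\alpha')(u,\dots,u,\cdot)$ with $c$ a function on the cone $\h X$; homogeneity in $u$ and the fact that $\h X$ spans $V$ force $c$ to be a constant $c_0$, so $q+\alpha = c_0(q+\alpha')$ as elements of $\Sym^{d-1}V^*\otimes V^*$ (using that they are determined by their polarizations along the diagonal, which hold on a Zariski-dense subset $\h X$). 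Decomposing under $\mathrm{sym}^d$: the $\Sym^d V^*$-components give $q = c_0 q$ (since $\alpha,\alpha'\in\bA=\Ker(\mathrm{sym}^d|_{\Sym^{d-1}V^*\otimes V^*})$), hence $c_0=1$, and then $\alpha=\alpha'$.

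The remaining task is to define $\bA^{\rm faithful}$ and check it is Zariski open containing $0$. I would set $\bA^{\rm faithful}$ to be the intersection of $\bA^{\rm simple}$ with the locus where $E^\alpha \hookrightarrow T_{\pp V}|_X$ is the unique nonzero map up to scalar — but by the argument above this uniqueness is automatic on all of $\bA^\circ$ once $h^0=1$ is known, so in fact $\bA^{\rm faithful}=\bA^{\rm simple}$ works and one merely invokes semicontinuity of $h^0(X,\End E^\alpha)$ together with $h^0(X,\End E^0)=h^0(X,\End T_X)=1$. The main obstacle I anticipate is the uniqueness-of-the-inclusion step: one must rule out that $E^\alpha$ admits a map to $T_{\pp V}|_X$ that is \emph{not} a multiple of the structural one — equivalently control $\Hom(E^\alpha, T_{\pp V}|_X)$. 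For $\alpha=0$ this is exactly $h^0(X,T_X^*\otimes T_{\pp V}|_X)$, which one computes from the cotangent sequence and Lemma \ref{simple} to be $1$; for nearby $\alpha$ semicontinuity gives $\le 1$, hence $=1$. Once this dimension count is in hand, the chain of deductions ``isomorphism $\Rightarrow$ equal subsheaves of $V\otimes\cO_X(1)$ $\Rightarrow$ proportional forms $\Rightarrow$ $\alpha=\alpha'$'' is essentially formal, so I would be careful to phrase the proportionality-to-equality passage (the constancy of $c(u)$) cleanly, as that is the one genuinely non-cohomological point.
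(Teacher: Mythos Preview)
Your approach is correct and reaches the same endpoint as the paper --- the equality $q+\alpha = c(q+\alpha')$ in $\Sym^{d-1}V^*\otimes V^*$, followed by $\bA\cap\Sym^dV^*=\{0\}$ to force $c=1$ --- but you get there by a genuinely different cohomological input. The paper defines $\bA^{\rm faithful}$ by the condition $h^1(X,E^\alpha(-d))=1$, i.e.\ $\dim\mathrm{Ext}^1(\cO_X(d),E^\alpha)=1$, so that $T_{\pp V}|_X$ is the \emph{unique} non-split extension of $\cO_X(d)$ by $E^\alpha$; an isomorphism $E^\alpha\cong E^{\alpha'}$ then induces an automorphism $\mu$ of $T_{\pp V}|_X$ compatible with both extension structures, and simplicity of $T_{\pp V}|_X$ forces $\mu$ to be a scalar. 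You instead use the condition $\dim\Hom(E^\alpha,T_{\pp V}|_X)=1$, arguing that the two canonical inclusions have the same image, whence the two quotient maps to $\cO_X(d)$ differ by a global unit. Each approach buys something: in the paper's version the lower bound $h^1\geq 1$ needs a short argument (if $h^1=0$ the extension splits, contradicting simplicity of $T_{\pp V}|_X$), whereas in yours $\dim\Hom\geq 1$ is immediate from the inclusion; conversely, the paper's extension-theoretic diagram hands you the identity on $\cO_X(d)$ for free, while you must invoke $\Aut(\cO_X(d))=\cc^*$ separately.

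Two small clean-ups. First, the detour through $\bA^{\rm simple}$ and the claim that simplicity of $E^\alpha,E^{\alpha'}$ bounds $\dim\Hom(E^\alpha,E^{\alpha'})$ is unnecessary and, as stated, not quite right (simplicity of each does not by itself bound $\Hom$ between them); your argument only needs the open locus where $h^0(X,(E^\alpha)^*\otimes T_{\pp V}|_X)=1$, which you correctly obtain by semicontinuity from Lemma~\ref{simple}(i) at $\alpha=0$. Second, the constancy of your $c(u)$ is most cleanly seen not via ``spanning'' but by noting that $c$ is a nowhere-vanishing section of $\cO_X$ on the projective variety $X$, hence constant; once $c$ is constant, the identity $(q+\alpha)(u,\dots,u,v)=c\,(q+\alpha')(u,\dots,u,v)$ for $u\in\h X$ extends to all $u\in V$ because a degree $d-1$ form vanishing on the degree-$d$ hypersurface $\h X$ is zero.
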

\begin{proof}
For $E^{\alpha  }(-d)  := E^{\alpha  } \otimes_X \cO_X(-d)$,    define
\[
\bA^{\rm faithful} := \{ \alpha \in \bA^\circ \: : \: h^1(X,  E^{\alpha  }(-d) ) =1 \}.
\]
 Since $h^1(X, T_X(-d))=1$ from  Lemma \ref{basic} (iv), we see that $0 \in \bA^{\rm faithful}.$  Hence 
 \[
U := \{ \alpha \in \bA^\circ \: : \: h^1(X,  E^{\alpha  }(-d) )  \le 1 \}
 \]
  is a nonempty Zariski open subset  of $\bA^\circ$ by semicontinuity.
We further claim that $h^1(X,  E^{\alpha  }(-d) ) \neq 0$ for any $\alpha \in\bA^\circ$, which shows that $\bA^{\rm faithful}$ coincides with the open subset $U$.
For each $\alpha   \in \bA^\circ$,  there is an exact sequence of vector bundles
$$
0 \to E^{\alpha  } \to T_{\pp V} |_X \to \cO_X(d) \to 0 $$
from the last column of the diagram in Definition \ref{d.phi} (iv).   If $H^1(X,  E^{\alpha  }(-d) ) =0, $ this sequence splits.  This contradicts the simplicity of $T_{\pp^n} |_X$ proved in Lemma \ref{simple} (i).

Now suppose that $E^{\alpha  }$ and $E^{\alpha'}$ are isomorphic for some $\alpha,\alpha' \in \bA^{\rm faithful}$.   The condition $h^1(X,  E^{\alpha  }(-d) ) =1 =h^1(X, E^{\alpha'}(-d))$ says that   $T_{\pp^n} |_X$ is the unique non-split extension of $\cO_X(d)$ by $E^{\alpha  }$ (and also by $E^{\alpha'}$) up to isomorphisms.   Thus there should be  an isomorphism $\mu :  T_{\pp^n} |_X  \to T_{\pp^n} |_X $ fitting into the following commutative diagram:
 \[
\begin{tikzcd}
0 \arrow[r] & E^\alpha \arrow[r] \arrow[d, phantom, sloped, "\cong"]  &T_{\pp^n} |_X  \arrow[r]  \arrow[  d,   "\mu"]   & \cO_X(d) \arrow[r] \arrow[d, phantom, sloped, "="] &0 \\
0 \arrow[r] & E^{\alpha'} \arrow[r]  &T_{\pp^n} |_X  \arrow[r]   & \cO_X(d) \arrow[r] & 0
\end{tikzcd}
\]
Since  $T_{\pp^n} |_X$ is simple by Lemma \ref{simple} (i), the isomorphism  $\mu$ is a homothety.  Note that two maps $T_{\pp^n} |_X  \to \cO_X(d)$  in this diagram come from the homomorphisms $\phi^{\alpha}$ and $\phi^{\alpha'}$ in Definition \ref{d.phi} (ii),   modulo   the trivial subsheaf $\cO_X$.   This implies that $q+\alpha = c (q+\alpha')$ for some $c \in \cc$. From $\bA \cap \Sym^d V^*  = \{ 0\}$ in Lemma \ref{l.dim}, we conclude $c = 1$ and $\alpha = \alpha'$.
\end{proof}

\begin{proof}[Proof of Theorem \ref{t.Main}]:  We have constructed a family of sheaves $\mathbb{E} $ over $X$  parameterized by an affine space $\bA$ whose central fiber $\mathbb{E} |_{X \times \{0\}}$ is isomorphic to the tangent bundle $T_X$. By taking $0 \in \bA$ as the base point $o \in \bA$,  we see that
(i) - (iv) have been checked in  Lemmata \ref{l.dim},  \ref{l.circ} and  \ref{l.faithful}. (v) is immediate because    $E^0=T_X$ is stable. \end{proof}

\medskip
{\bf Acknowledgment} Jun-Muk Hwang would like to thank Mihai Paun and Thomas Peterell   for amusing discussions to locate the flaw in \cite{Hw}, which had motivated the current work.

\end{document}